\newcommand{\be}{\begin{equation}}
\newcommand{\ee}{\end{equation}}
\newcommand{\beq}{\begin{eqnarray}}
\newcommand{\eeq}{\end{eqnarray}}
\newtheorem{thm}{Theorem}[section]
\newtheorem{lma}{Lemma}[section]
\newtheorem{cor}{Corollary}[section]
\newtheorem{defn}{Definition}[section]
\theoremstyle{remark}
\newtheorem{rem}{Remark}[section]
\numberwithin{equation}{section}
\def\be{\begin{equation}}
\def\ee{\end{equation}}
\def\bee{\begin{equation*}}
\def\eee{\end{equation*}}
\def\ol{\overline}
\def\lf{\left}
\def\ri{\right}
\def\Ric{\text{\rm Ric}}
\def\Rm{\text{\rm Rm}}
\def\cR{ \mathcal{R}}
\def\wh{\widehat}
\def\wt{\widetilde}
\def\p{\partial}
\def\ol{\overline}
\def\heat{\lf(\frac{\p}{\p t}-\Delta_t\ri)}
\def\tr{\operatorname{tr}}
\def\e{\varepsilon}
\def\a{{\alpha}}
\def\b{{\beta}}
\def\R{\mathbb{R}}
\begin{document}

\title[]
{Rigidity of Lipschitz map using harmonic map heat flow}

 \author{Man-Chun Lee$^1$}
\address[Man-Chun Lee]{Department of Mathematics, The Chinese University of Hong Kong, Shatin, Hong Kong, China}
\email{mclee@math.cuhk.edu.hk}

\author{Luen-Fai Tam$^2$}
\address[Luen-Fai Tam]{The Institute of Mathematical Sciences and Department of Mathematics, The Chinese University of Hong Kong, Shatin, Hong Kong, China.}
 \email{lftam@math.cuhk.edu.hk}

 \thanks{$^1$Research partially supported by Hong Kong RGC grant (Early Career Scheme) of Hong Kong No. 24304222 and a direct grant of CUHK}
\thanks{$^2$Research partially supported by Hong Kong RGC General Research Fund \#CUHK 14301517.}

\renewcommand{\subjclassname}{
  \textup{2020} Mathematics Subject Classification}
\subjclass[2020]{Primary 51F30, 53C24}

\date{\today}

\begin{abstract}
Motivated by the Lipschitz rigidity problem in scalar curvature geometry,  we prove that if a closed smooth spin manifold admits a distance decreasing continuous map of non-zero degree to a sphere,  then either the scalar curvature is strictly less than the sphere somewhere or the map is a distance isometry.  Moreover, the property also holds for continuous metrics with scalar curvature lower bound in some weak sense. This extends a result in the recent work of Cecchini-Hanke-Schick \cite{CecchiniHankeSchick2022} and answers a question of Gromov.  The method is based on studying the harmonic map heat flow coupled with the Ricci flow from rough initial data to reduce the case  to smooth metrics and smooth maps so that results by Llarull \cite{Llarull1998} can be applied.
\end{abstract}

\maketitle

\markboth{Man-Chun Lee, Luen-Fai Tam}{Rigidity of Lipschitz map to sphere}
\section{introduction}

In recent years,  there has been many exciting development in understanding the scalar curvature of a Riemannian manifold.  We refer readers to Gromov's lecture note \cite{GromovLecture} on scalar curvature for a comprehensive overview.  In \cite{Llarull1998}, Llarull proved the following striking result which confirms one of Gromov's conjectures.
\begin{thm}[\cite{Llarull1998}]
Let $M^n$ be a compact spin manifold.  If $f:(M,g)\to (\mathbb{S}^n,g_{sphere})$ is a distance decreasing map of non-zero degree into the unit sphere in $\R^{n+1}$ with standard metric and if the scalar curvature  $\mathcal{R}(g)$ of $g$ is greater than or equal to  $n(n-1)$, then $f$ must be an isometry.
\end{thm}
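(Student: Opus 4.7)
The plan is to follow Llarull's spinor/index-theoretic argument. Since $M^n$ is spin and $\mathbb{S}^n$ admits a canonical spin structure, we form the Hermitian bundle $E = f^*S_{\mathbb{S}^n}$ with pullback connection, and consider the Dirac operator $D_E$ on the twisted bundle $S_M \otimes E$. The key analytic input is the Lichnerowicz--Schr\"odinger--Weitzenb\"ock formula
\[
D_E^2 = \nabla^*\nabla + \tfrac{1}{4}\mathcal{R}(g) + \mathcal{R}^E,
\]
where $\mathcal{R}^E$ is the Clifford-curvature endomorphism built from the curvature of $E$. Because $\mathbb{S}^n$ has constant sectional curvature $1$, the curvature of $S_{\mathbb{S}^n}$ has the explicit form $\tfrac12 e_i \wedge e_j$ in any local orthonormal frame, so that $\mathcal{R}^E$ can be computed pointwise in terms of the differential $df$ and the Clifford action.

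Next I would prove the sharp pointwise estimate
\[
\mathcal{R}^E \geq -\tfrac{1}{4}n(n-1),
\]
using that $f$ is distance decreasing, i.e., the singular values of $df$ are all at most $1$. Combined with the hypothesis $\mathcal{R}(g) \geq n(n-1)$, this yields the nonnegativity of the zeroth-order term in the Weitzenb\"ock formula, and one checks that equality at a point characterizes exactly the condition that all singular values of $df$ equal $1$ at that point, i.e., that $f$ is a pointwise isometry there. Then, because $\deg(f) \neq 0$, the Atiyah--Singer index theorem (applied to $D_E$ in even dimension, with an $\mathbb{S}^1$-suspension or Clifford-linear $KO$-index argument when $n$ is odd) produces a nontrivial solution $\phi$ of $D_E \phi = 0$.

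Integrating the Weitzenb\"ock identity against $\phi$ gives
\[
0 = \int_M |\nabla \phi|^2 + \int_M \lf(\tfrac{1}{4}\mathcal{R}(g) + \mathcal{R}^E\ri)|\phi|^2,
\]
and since both terms are nonnegative, $\phi$ must be parallel and the curvature term must vanish wherever $\phi \neq 0$. Parallelism propagates the nonvanishing of $\phi$ to all of $M$, so the equality case of the pointwise estimate forces $\mathcal{R}(g) \equiv n(n-1)$ and $df$ to have all singular values equal to $1$ at every point. Thus $f$ is a local isometry, and being a distance-decreasing map of nonzero degree between manifolds of equal volume, it is an isometry.

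The main obstacle in this plan is the sharp pointwise bound on $\mathcal{R}^E$ together with its equality characterization: this requires a careful algebraic diagonalization of the Clifford endomorphism $\tfrac12 \sum_{i<j} (e_i \cdot e_j) \otimes (f^*\omega^{ij})$ in a frame that diagonalizes $df$, plus an eigenvalue computation for the resulting operator on a tensor bundle. The inequality ultimately rests on the fact that the eigenvalues of the Clifford action $\tfrac12 e_i \cdot e_j$ on spinors have absolute value $\tfrac12$, so the sum over pairs picks up $\binom{n}{2}$ terms and the estimate is saturated precisely when $df$ is orthogonal. A secondary difficulty is the odd-dimensional index step, which must be handled by a suspension argument or by working with a Clifford-linear Dirac operator so that $\deg f \neq 0$ still forces nontrivial kernel.
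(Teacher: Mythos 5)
First, a point of orientation: the paper does not prove this statement at all---it is quoted as Llarull's theorem from \cite{Llarull1998} and used as a black box in the proof of Theorem \ref{intro-mainTHM} (the paper's actual contribution is the reduction of the $C^0$/Lipschitz case to this smooth statement via Ricci--DeTurck flow and harmonic map heat flow). So there is no internal proof to compare with; your plan is essentially a reconstruction of Llarull's original spinor argument, which is the right and expected route for the smooth statement.

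Within that route, however, the index step as you wrote it would fail, and this is a genuine gap rather than a presentational one. If you twist by the full pulled-back spinor bundle $E=f^*S_{\mathbb{S}^n}$ and apply Atiyah--Singer to $D_E$, the index is $\{\hat{A}(TM)\,\mathrm{ch}(f^*S_{\mathbb{S}^n})\}[M]$; since $\mathrm{ch}(S^+_{\mathbb{S}^n})+\mathrm{ch}(S^-_{\mathbb{S}^n})$ is the constant $2^{n/2}$ on an even sphere (the Euler-class terms cancel and the sphere has no other characteristic classes), this index is $2^{n/2}\hat{A}(M)[M]$, which can perfectly well vanish (e.g.\ $M=\mathbb{S}^n$ itself), and the hypothesis $\deg f\neq 0$ never enters. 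Llarull's argument avoids this by exploiting the $\mathbb{Z}_2$-grading $f^*S_{\mathbb{S}^n}=f^*S^+\oplus f^*S^-$: one works with the doubly graded operator (equivalently, the difference of the two half-twisted indices), for which the relevant characteristic class is $\mathrm{ch}(f^*S^+)-\mathrm{ch}(f^*S^-)=\pm f^*e(T\mathbb{S}^n)$, so the index equals $\pm 2\deg f\neq 0$ and a nontrivial harmonic spinor exists in a summand on which your curvature estimate still applies. Your plan needs this refinement spelled out; ``apply Atiyah--Singer to $D_E$ because $\deg f\neq0$'' is not yet an argument. Two smaller points: the pointwise bound should be stated as $\langle\mathcal{R}^E\phi,\phi\rangle\geq-\tfrac14\sum_{i\neq j}\lambda_i\lambda_j\,|\phi|^2$ with $\lambda_i$ the singular values of $df$ (this is also exactly what makes the equality case give $\lambda_i\equiv1$, and what lets Llarull weaken the hypothesis to $(1,\Lambda^2)$-contracting); and the endgame is cleaner via covering-space theory---a local isometry from a closed manifold onto $\mathbb{S}^n$ ($n\geq2$ simply connected) is a Riemannian covering, hence a global isometry---rather than the ``equal volume'' argument you sketch. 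The odd-dimensional case does require the suspension-type device you mention, as in Llarull's paper.
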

In fact, Llarull proved the Theorem under  a weaker assumption that $f$ is so-called $(1,
\Lambda^2)$-contracting. This means that the map $\Lambda^2df_x$ from $\Lambda^2(T_x(M))$ to $\Lambda^2(T_{f(x)}\mathbb{S}^n)$ induced by $f$ has norm less than or equal to 1. If we only assume the map is $(1,\Lambda^k)$-contracting for $k\ge 3$, then the result is not true \cite{Llarull1998}.
The theorem   was later generalized by Goette-Semmelmann \cite{GoetteSemmelmann2002} to manifolds with non-zero Euler characteristic and non-negative curvature operator in place of the sphere. The method is based on the study of twisted Dirac operators. It is then asked by Gromov whether the rigidity still holds if $f$ is only distance decreasing continuous map, \cite[Section 4.5, question (b)]{GromovLecture}. Since distance can be defined even if we only assume that the metric $g$ is $C^0$ \cite{CecchiniHankeSchick2022},  one may ask whether the result is still true when $g$ is $C^0$ and $f$ is only distance decreasing.  In this case, we need to define  a notion of scalar curvature lower bound on metrics which are only $C^0$.

In this direction,  Cecchini-Hanke-Schick \cite{CecchiniHankeSchick2022} prove that Lllarull's theorem is still true in the sense that $f$ is a metric isometry, under the following assumptions:  (i) $g$ is only $W^{1,p}$ for $p>n$; (ii) the scalar curvature is bounded below by $n(n-1)$ in the distribution sense introduced by  Lee-LeFloch \cite{LeeLeFloch2015}; (iii) $f$ is a Lipschitz map which is distance non-increasing or more generally is $(1,\Lambda^2)$-contracting; (iv) $n$ is even.

First note that assumption (i) implies that $g$ is $C^0$ by Sobolev embedding. Moreover, $f$ is Lipschitz implies that $df$ can be defined almost everywhere and we still can talk about $(1,\Lambda^2)$-contracting. As in  \cite{Llarull1998}, Cecchini-Hanke-Schick use Dirac operator under these weaker conditions on regularity. They conjecture that the result should be true even if dimension $M$ is odd.

Motivated by the question of Gromov and the work of Cecchini-Hanke-Schick,  in this work, we consider the problem for general dimension $n$ and metric $g$ with only $C^0$ regularity.  With $C^0$ metric structure, the Lipschitz constant of a continuous map can still be defined:
\begin{defn}
Let $M$ and $N$ be two smooth manifolds. Suppose $g$ and $h$ are continuous metrics on $M$ and $N$ respectively. A continuous map $f:(M,d_g)\to (N,d_h)$ is said to be $\Lambda$-Lipschitz if
$$\mathrm{Lip}_{g,h}(f)=\sup\left\{ \frac{d_h\left( f(x),f(y)\right)}{d_g(x,y)}: x,y\in M, x\neq y\right\}\leq \Lambda.$$
We say that $f$ is Lipschitz continuous if $f$ is $\Lambda$-Lipschitz for some $\Lambda>0$. Here the Riemannian distance $d_{\hat g}$ of a continuous metric $\hat g$ is defined by minimizing the length of smooth regular curve between points with respect to $\hat g$, see \cite[Reminder 1.2]{CecchiniHankeSchick2022} and \cite[(2.2)]{Burtscher2015}.
\end{defn}
\begin{rem}
When both $g,h$ and $f$ are smooth, then the map $f$ being $\Lambda$-Lipschitz implies that $$f^*h\leq \Lambda^2 \cdot g$$
on $M$ by letting $x\to y$.  The converse holds trivially by integrating along geodesic. In the non-smooth case, it still holds almost everywhere, for instance see \cite[Proposition 2.1]{CecchiniHankeSchick2022}, but we will not rely on this fact in this work.
\end{rem}

The definition of Lipschitz continuity of a map with respect to a continuous metric $g$ can be defined naturally while the scalar curvature lower bound is very subtle. We use the following:
\begin{defn}\label{defn-C0-scalar}
Let $g_0$ be a $C^0$ metric on a compact manifold $M$. We say that $g_0$ is of $\mathcal{R}(g_0)\geq \sigma_0$ if there exists a sequence of smooth metric $g_i$ on $M$ such that $\mathcal{R}(g_i)\geq \sigma_0$ on $M$ and $g_i\to g_0$ in $C^0(M)$ as $i\to +\infty$.
\end{defn}
The definition can be rephrased as follows: Let $G_{\sigma_0}$ be the subset of $C^2$ metrics with scalar curvature bounded from below by $\sigma_0$. A metric $g\in C^0(M)$ is said to have scalar curvature bounded below by $\sigma_0$ if $g\in \ol G_{\sigma_0}$, where the closure is taken in terms of $C^0$ norm.  This definition is natural by Gromov \cite{Gromov2014} and Bamler \cite{Bamler2016} in the sense that if $g\in \ol G_{\sigma_0}$ and if  $g$ is $C^2$, then the scalar curvature of $g$ is bounded below by $\sigma_0$ in the usual sense.

Suppose $g$ is a $C^0$ metric. If $g$ is satisfies one of the following, then it is known that it also has scalar curvature bounded below by $\sigma$ in the sense of  Definition~\ref{defn-C0-scalar}:   (a) $g$ has scalar curvature bounded below by $\sigma$ in the sense of Burkhardt-Guim \cite{Burkhardt2019} using regularizing Ricci flow; (b) $g\in W^{1,p},p>n$ with $\mathcal{R}\geq \sigma$ in the sense of distribution as in \cite{LeeLeFloch2015} by Jiang-Sheng-Zhang \cite{JiangShengZhang2021} ; (c) $g$ is smooth away from singularity $\Sigma$ of co-dimension at least three and has $\mathcal{R}(g)\geq \sigma$ outside $\Sigma$ by the work \cite{LeeTam2021} of the authors; (d) there exist smooth $g_i\to g$ in $C^0$ norm so that some integral form of  lower bound of $\mathcal{R}(g_i)$ is satisfied by Huang and the first named author \cite{HuangLee2021}.

Under this general notion of scalar curvature lower bound, we obtain the following:
\begin{thm}\label{intro-mainTHM}
Let $M^n$ be a compact Riemannian spin manifold of dimension $n$ and $g_0$ is a $C^0$ metric on $M$ with $\mathcal{R}(g_0)\geq n(n-1)$ in the sense of definition~\ref{defn-C0-scalar}. Suppose there is $1$-Lipschitz continuous map $f:(M,d_{g_0})\to \mathbb{S}^n$ with non-zero degree, then $f$ is a distance isometry.
\end{thm}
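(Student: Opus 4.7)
The plan is to approximate both the rough metric $g_0$ and the Lipschitz map $f$ by smooth objects via a coupled Ricci-harmonic map heat flow, so that Llarull's theorem becomes applicable to the approximants, and then to pass to the limit.

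First, by Definition~\ref{defn-C0-scalar} I fix a sequence of smooth metrics $g_i\to g_0$ in $C^0(M)$ with $\cR(g_i)\ge n(n-1)$. Because $C^0$-convergence of continuous metrics implies uniform convergence of the induced length distances, $d_{g_i}\to d_{g_0}$ uniformly on $M\times M$, and hence $f$ is $(1+\ve_i)$-Lipschitz from $(M,d_{g_i})$ to $\mathbb{S}^n$ with $\ve_i\to 0$.

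Next, for each $i$ I run the Ricci flow $\p_t g_i(t)=-2\Ric(g_i(t))$ from $g_i(0)=g_i$ on a uniform interval $[0,\tau]$. Applying Hamilton's maximum principle to $\p_t\cR\ge\Delta\cR+\tfrac{2}{n}\cR^2$ yields
\begin{equation*}
\cR(g_i(t))\ge \frac{n(n-1)}{1-2(n-1)t}\ge n(n-1),
\end{equation*}
with strict gain for $t>0$. Simultaneously I solve the harmonic map heat flow $\p_t f_i(t)=\tau_{g_i(t)}(f_i(t))$ with $f_i(0)=f$; short-time existence from Lipschitz initial data into the smooth target sphere is standard, and parabolic regularity makes $f_i(t)$ smooth for $t>0$. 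Homotopy invariance yields $\deg f_i(t)=\deg f\ne 0$.

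The \emph{central technical step} is to show that the $(1,\Lambda^2)$-contracting property of $f_i(t)$ is preserved with vanishing loss as $i\to\infty$ and $t\to 0$. I expect to propagate a Bochner-type estimate for $|\Lambda^2 df_i|^2$ under the coupled flow: the key feature, due to Hamilton, is that the Ricci-flow variation of the source metric exactly cancels the source Ricci-curvature term in the Bochner identity for $|df_i|^2$, leaving only a target-curvature contribution from $\mathbb{S}^n$ which, when paired against $2$-forms, has a favorable sign expressible via the second elementary symmetric function of the squared singular values of $df_i$. Alternatively, one can run a two-point maximum principle on
\begin{equation*}
F_i(x,y,t)=d_{\mathbb{S}^n}(f_i(x,t),f_i(y,t))-(1+\ve_i)\,d_{g_i(t)}(x,y)
\end{equation*}
on $M\times M$ away from the diagonal, using Toponogov-type distance estimates for the evolving sphere distance.

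With these bounds, for every $t\in(0,\tau]$ the smooth triple $(M,g_i(t),f_i(t))$ is close to satisfying Llarull's hypotheses: $\cR(g_i(t))\ge n(n-1)$ exactly, and $f_i(t)$ is $(1,\Lambda^2)$-contracting up to an error $\ve_i'\to 0$. A small rescaling $\tilde g_i(t)=(1+\ve_i')^2 g_i(t)$ makes $f_i(t)$ exactly $(1,\Lambda^2)$-contracting while scaling the scalar curvature by $(1+\ve_i')^{-2}$; choosing $t=t_i\to 0$ so that the Ricci flow's strict scalar gain dominates $\ve_i'$ ensures that the rescaled metric still satisfies $\cR\ge n(n-1)$. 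Llarull's theorem then forces $f_i(t_i)$ to be a distance isometry for each $i$. Since $f_i(t_i)\to f$ uniformly and $d_{g_i(t_i)}\to d_{g_0}$, $f$ itself is a distance isometry.

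The \emph{main obstacle} is the Lipschitz preservation along the coupled flow. The positive sectional curvature of $\mathbb{S}^n$ can increase Lipschitz constants of harmonic map heat flow solutions, and the rough, only-Lipschitz initial data prevents a direct Bochner maximum principle at $t=0$. The coupling with Ricci flow that kills the source Ricci term in the Bochner identity, combined with a careful balance between the small Lipschitz loss and the strict scalar gain along Ricci flow (used to absorb the loss via rescaling), is what allows Llarull's non-quantitative rigidity to be invoked in the limit.
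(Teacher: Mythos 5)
Your overall strategy is the same as the paper's (approximate $g_0$ by smooth metrics with $\mathcal{R}\ge n(n-1)$, run a Ricci-type flow coupled with harmonic map heat flow from the Lipschitz map, apply Llarull at positive times, pass to the limit), but two steps that you treat as routine or heuristic are precisely where the real work lies, and as stated they do not close. First, running the plain Ricci flow from each $g_i$ does not give a \emph{uniform} interval $[0,\tau]$: the existence time and curvature of $g_i(t)$ are controlled by $\|\mathrm{Rm}(g_i)\|_{\infty}$, which $C^0$-convergence does not bound, so your uniform $\tau$, the bound $|\mathrm{Rm}(g_i(t))|\le a/t$ needed to start the harmonic map heat flow from merely Lipschitz data, and the final assertion $d_{g_i(t_i)}\to d_{g_0}$ are all unjustified. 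The paper obtains exactly these uniform statements from M.~Simon's Ricci--DeTurck $G$-flow with a fixed background metric $C^0$-close to $g_0$ (uniform $T$, $|\widetilde\nabla^\ell g_i(t)|\le C_\ell t^{-\ell/2}$, and $g(t)\to g_0$ in $C^0$ as $t\to0$), and then transfers the coupled harmonic map heat flow between the DeTurck and Ricci gauges by the diffeomorphisms $\Phi_t$; some replacement for this mechanism is indispensable.

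Second, and more seriously, your ``central technical step'' rests on a sign that is wrong and on a quantitative margin that does not exist. Coupling with Ricci flow does cancel the domain Ricci term, but the remaining target-curvature term for $\mathbb{S}^n$ is \emph{unfavorable}: the contracting property is necessarily lost at a definite rate, not preserved with favorable sign. The paper's key estimate (its Theorem~\ref{prop:metric-lowerBound}, proved by a tensor maximum principle applied to $A=\lambda(t)g-F^*h$ with a null-eigenvector argument) quantifies this loss sharply: $\bigl(1-2(n-1)t\bigr)F(t)^*h\le g(t)$. The constant $2(n-1)$ matches \emph{exactly} the scalar curvature improvement $\mathcal{R}(g(t))\ge n(n-1)/\bigl(1-2(n-1)t\bigr)$, so that $F(t)$ is precisely $1$-Lipschitz for $\check g(t)=(1-2(n-1)t)^{-1}g(t)$, which has $\mathcal{R}\ge n(n-1)$ with no room to spare; Llarull is then applied at each fixed $t>0$ to the limit flow and limit map, and only afterwards $t\to0$. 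In particular there is no ``strict scalar gain dominating the Lipschitz loss'': both are of order $t$ with the \emph{same} constant, so your plan of choosing $t_i\to0$ to absorb an unquantified loss $\ve_i'$ cannot work unless you actually prove the sharp rate, and would fail outright if your Bochner argument produced any larger constant. Your alternative two-point maximum principle on $d_{\mathbb{S}^n}(f_i(x,t),f_i(y,t))-(1+\ve_i)d_{g_i(t)}(x,y)$ has no known favorable comparison when the target is positively curved and the source metric evolves by Ricci flow. Finally, note that propagating $|\Lambda^2 df|^2$ (the $(1,\Lambda^2)$-contracting condition) is exactly what the paper states it cannot do; the quantity that admits a maximum principle is the full pointwise inequality $F^*h\le\lambda(t)^{-1}g(t)$.
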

In particular, this gives a complete answer to \cite[Section 4.5, question (b)]{GromovLecture} for all dimension $n$. We also confirm the conjecture    in \cite{CecchiniHankeSchick2022} that their result is true for the odd dimensional cases, assuming that the map is 1-Lipschitz. However, we are unable to prove the result under the weaker condition that the map is $(1,\Lambda^2)$-contracting.

 Our proof is to reduce the cases to the smooth cases and apply Llarull's theorem. Since in this case, the metric and the map are both non-smooth, we will use Ricci-DeTurck flow and the results of M. Simon \cite{Simon2002} to regularize the metric and use a smooth map to regularize the Lipschitz map. In order to use Llarull's result, the crucial point is  a new monotonicity for harmonic heat flow coupled with the Ricci flow.
 Moreover, using the geometric flow approach, if in addition $g_0$ is of better regularity as considered in \cite{CecchiniHankeSchick2022}, one can obtain a slightly stronger conclusion on the regularity of $g_0$, see Theorem~\ref{Thm:better-regular}.

Llarull's results have been generalized in another direction. Namely maps between manifolds with boundary.    It was previously considered by Lott \cite{Lott2021} using boundary value problems for Dirac operators, see also \cite{BaerHanke2020,CecchiniZeidler2021} for related works.  When $n=3$, this was also considered by Hu-Liu-Shi \cite{HuLiuShi2022} using $\mu$-bubbles where the boundary conditions can be further relaxed in contrast with method of Dirac operator. In this regard,  using gluing method with Theorem~\ref{intro-mainTHM}, we have a relatively simple rigidity result for  domains inside sphere which holds for all dimensions:
\begin{cor}\label{c-domain}
Suppose $\Omega$ be a domain inside the standard sphere $(\mathbb{S}^n,h)$ with smooth boundary.  If $g_0$ is a smooth metric on $\Omega$ such that
\begin{enumerate}
\item[(i)] $g_0\geq h$ where $h$ is the standard spherical metric;
\item[(ii)] $\mathcal{R}(g_0)\geq n(n-1)$ on $\Omega$;
\item[(iii)] $H(g)\geq H(h)$ on $\partial\Omega$, where $H(g), H(h)$ are the mean curvatures with respect to the unit outward normals and with respect to $g, h$;
\item [(iv)] $g_0=h$ on $\partial \Omega$,
\end{enumerate}
then $g_0=h$ on $\Omega$. Moreover,  if $\Omega$ is the hemisphere, then the same conclusion holds without assumption (iv).
\end{cor}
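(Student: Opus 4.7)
The strategy is to glue $(\Omega,g_0)$ with its complement inside $(\mathbb{S}^n,h)$ to manufacture a $C^0$ metric on $\mathbb{S}^n$ to which Theorem~\ref{intro-mainTHM} can be applied. Set $\tilde g=g_0$ on $\Omega$ and $\tilde g=h$ on $\mathbb{S}^n\setminus\Omega$. By assumption (iv) the induced boundary metrics match, so $\tilde g$ is continuous on $\mathbb{S}^n$; in fact $\tilde g$ is Lipschitz, and hence lies in $W^{1,p}$ for every $p>n$.

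The technical heart of the argument is to show $\mathcal{R}(\tilde g)\ge n(n-1)$ in the sense of Definition~\ref{defn-C0-scalar}. On each smooth piece this holds classically by (ii) together with $\mathcal{R}(h)=n(n-1)$. The singular contribution to the Lee-LeFloch distributional scalar curvature concentrated along $\partial\Omega$ is a positive multiple of the sum of the outward mean curvatures from each side. The outward normal from $\mathbb{S}^n\setminus\Omega$ at $\partial\Omega$ is the inward normal from $\Omega$, so that sum equals $H(g_0)-H(h)\ge 0$ by (iii). Hence $\mathcal{R}(\tilde g)\ge n(n-1)$ in the distributional sense, and because $\tilde g\in W^{1,p}$ for $p>n$, the theorem of Jiang-Sheng-Zhang (item (b) in the list preceding Theorem~\ref{intro-mainTHM}) upgrades this to membership in the $C^0$-closure of $G_{n(n-1)}$, which is exactly Definition~\ref{defn-C0-scalar}.

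Next apply Theorem~\ref{intro-mainTHM} to $F=\mathrm{id}\colon (\mathbb{S}^n,d_{\tilde g})\to (\mathbb{S}^n,d_h)$, which has degree one. Since $\tilde g\ge h$ pointwise, every smooth curve is at least as long in $\tilde g$ as in $h$, giving $d_{\tilde g}\ge d_h$; thus $F$ is $1$-Lipschitz. The theorem then forces $F$ to be a distance isometry, so $d_{\tilde g}=d_h$ on $\mathbb{S}^n$. Restricting to the interior of $\Omega$, sufficiently close points are joined by minimizers staying inside $\Omega$ (a ball-around-$x$ argument using Lipschitz continuity of $\tilde g$), so the local induced distances of $g_0$ and $h$ agree. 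Because $g_0$ and $h$ are both smooth on $\Omega$, the standard recovery of a Riemannian metric from its distance function yields $g_0=h$ on $\Omega$.

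For the hemisphere case, replace the gluing with the Riemannian double: let $M=\Omega\cup_{\partial\Omega}\Omega$, equipped with $g_0$ on each copy. The glued metric $\tilde g$ is again $C^0$ since the two induced boundary metrics are trivially equal. Because the equator is totally geodesic in $(\mathbb{S}^n,h)$, condition (iii) reads $H(g_0)\ge 0$, so the doubled mean-curvature jump $2H(g_0)\ge 0$ and the previous scalar curvature verification applies verbatim. Define $F\colon M\to\mathbb{S}^n$ to be the identity on one copy of $\Omega$ and the equator-reflection $\sigma$ on the other; since $\sigma$ fixes $\partial\Omega$, this is continuous, and it is easily checked to have degree $\pm 1$. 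On each copy $F^{*}h\le g_0$ (by (i), using that $\sigma$ is an $h$-isometry), so $F$ is $1$-Lipschitz, and Theorem~\ref{intro-mainTHM} applied to $(M,\tilde g)$ and then restricted to one copy again produces $g_0=h$ on $\Omega$. The main obstacle throughout is the scalar-curvature verification in step two; once the mean-curvature jump is identified with $H(g_0)-H(h)$ and the Jiang-Sheng-Zhang approximation is invoked, the rigidity is a direct application of Theorem~\ref{intro-mainTHM}.
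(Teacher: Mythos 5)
Your argument is correct and is essentially the paper's own proof: the same gluing of $g_0$ with $h$, the same verification of the distributional scalar curvature bound via the mean-curvature jump (Lee--LeFloch) upgraded to Definition~\ref{defn-C0-scalar} by Jiang--Sheng--Zhang, and the same application of Theorem~\ref{intro-mainTHM} to a degree-one $1$-Lipschitz map. Your hemisphere case via the Riemannian double with the folding map is just the paper's ``reflected metric on $\mathbb{S}^n\setminus\Omega$ with the identity map'' construction transported by the equatorial reflection, so it is not a genuinely different route.
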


   One may indeed obtain an analogous statement as in \cite[Theorem B]{CecchiniHankeSchick2022} using the same strategy. We work on the sphere only to illustrate the use of Theorem \ref{intro-mainTHM}.

\medskip

The paper is organized as follows. In Section~\ref{Sec: RF estimate},  we review some basic definition of harmonic map heat flow coupled with the Ricci flow and establish a new monotonicity. In Section~\ref{Sec: HMHF-roughtData}, we construct the harmonic map heat flow coupled with a smooth Ricci flow starting from Lipschitz initial data and obtain estimates under scaling invariant curvature control. In Section~\ref{Sec:Proof-Main}, we will give the proof of main results.
\section{Monotonicity along harmonic heat flow}\label{Sec: RF estimate}

In this section, we will prove that some quantities will be monotone along the harmonic heat flow coupled with the Ricci flow on compact manifolds. Using these, we can regularize the Lipschitz map and metric while preserving the rigidity structure.  Let us first recall the notion of harmonic map heat flow.

Suppose $f: (M,g)\to (N,h)$ is a smooth map, then $df$ is a section of  $T^*(M)\otimes f^{-1}(T(N))$ where $f^{-1}(T(N))$ is the pull-back bundle by $f$. Let $D$ be the covariant derivative induced by the Riemannian connections of $g, h$. Then the second fundamental form $Ddf$ is a section of $  T^*(M)\otimes T^*(M)\otimes f^{-1}(T(N))$. The trace $\tau(f)$ of $Ddf$ with respect to $g$ is called the tension field which is a vector field along $f$. The energy density $e(f)$ of the map is $|df|^2$ where the inner product is taken with respect the metric in $ T^*(M)\otimes f^{-1}(T(N))$ induced by $g, h$. Equivalently,  we have $e(f)=\tr_{g}f^*(h)$.

If $g(t)$ is a smooth family of metrics on $M$, then the harmonic map heat flow is given by
\be\label{e-harmonicflow}
\frac{\p}{\p t}F=\tau(F)
\ee
where $F: M\times[0,T]\to N$ and $\tau(F)$ at time $t$ is the tension field of $F(\cdot,t)$ with respect to the metric $g(t)$ in the domain.  We note that $\frac{\p}{\p t}F$ is indeed $F_*(\frac{\p}{\p t})$. We want to discuss the behaviours of the eigenvalues of $(F(t))^*(h)$ with respect to $g(t)$.

\begin{thm}\label{prop:metric-lowerBound}
Let $(M^n,g_0)$ and $(N^n,h)$ be two compact manifolds such that $f^*h\leq \Lambda^2 g_0$ on $M$ for some $\Lambda>0$.  Suppose $g(t)$ is a smooth family of metrics   on $M\times [0,T]$ satisfying
$$
\p_t g_{ij}=2k_{ij}
$$
such that $k+\Ric(g)\ge0$.
Let $F(t): (M,g(t))\to (N,h)$ be a family of smooth map satisfying the harmonic map heat flow:
\begin{equation}
\partial_t F =\tau( F),\;\;
F(0)=f.
\end{equation}
  If either one of the followings hold:
  \begin{enumerate}
  \item[(i)]  the sectional curvature of $h$ satisfies ${K}(h)\leq \kappa$ for some $\kappa\ge0$ or;
  \item[(ii)]the sectional curvature of $h$ is non-negative and $\Ric(h)\le (n-1)\kappa$,
  \end{enumerate}
  then
$$ (\Lambda^{-2}-2(n-1)\kappa t)F^*h\leq g(t),$$
on $M\times [0,T]$.

\end{thm}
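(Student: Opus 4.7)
The plan is to apply the maximum principle to the symmetric $2$-tensor
$$Z_{ij}(t) := g_{ij}(t) - \phi(t)(F^*h)_{ij}, \qquad \phi(t) := \Lambda^{-2} - 2(n-1)\kappa\, t.$$
The hypothesis $f^*h \leq \Lambda^2 g_0$ is exactly $Z(0) \geq 0$. When $\phi(t) \leq 0$, i.e.\ for $t \geq T^* := 1/(2(n-1)\kappa\Lambda^2)$, the desired inequality $\phi F^*h \leq g$ is automatic, so it suffices to establish $Z(t) \geq 0$ on $[0, \min(T, T^*)]$.

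The heart of the argument is a Bochner-type identity for $F^*h$ along the coupled flow. Since $\partial_t(F^*h)_{ij}$ is intrinsic to $F$ (and involves no $k$-contribution), while $\Delta_{g(t)}$ brings in $g(t)$-dependence, the standard Eells--Sampson computation yields
$$(\partial_t - \Delta_{g(t)})(F^*h)_{ij} = -2 g^{kl} h_{\alpha\beta}(DdF)^\alpha_{ki}(DdF)^\beta_{lj} - 2\,\mathrm{Sym}_{ij}\bigl[\mathrm{Ric}^M_i{}^m (F^*h)_{mj}\bigr] - 2 g^{kl}\,\mathrm{Sym}_{ij}\bigl[h(R^h(F_k, F_i)F_l, F_j)\bigr],$$
while $(\partial_t - \Delta_{g(t)}) g = 2k$. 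Suppose $Z \geq 0$ fails first at $(p_0, t_1) \in M \times (0, T^*]$ with unit null eigenvector $v \in T_{p_0}M$. Extending $v$ to a time-independent vector field $V$ with $\nabla V|_{p_0} = 0$ and applying the scalar maximum principle to $u := Z(V,V)/g(V,V)$ gives $(\partial_t - \Delta_{g(t_1)})Z(v,v) \leq 0$ at $(p_0, t_1)$.

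At this point $(F^*h)(v,v) = 1/\phi(t_1)$ is the largest eigenvalue of $F^*h$ with respect to $g(t_1)$. Let $\{e_k\}$ be a $g(t_1)$-orthonormal frame at $p_0$ diagonalizing $F^*h$ with $e_n = v$ and eigenvalues $0 \leq \mu_k \leq \mu_n = 1/\phi(t_1)$, and write $F(e_k) = \sqrt{\mu_k}\, w_k$, where $\{w_k : \mu_k > 0\}$ is an orthonormal family in $T_{F(p_0)}N$. Substituting into the PDE, the $\mathrm{Ric}^M$ term evaluates to $-2\mathrm{Ric}^M(g)(v,v)/\phi(t_1)$, the $|DdF|^2$ term is non-positive, and the target-curvature contribution becomes $+2\sum_k K^h(w_k, w_n)\mu_k\mu_n$.

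The key estimate is $\sum_k K^h(w_k, w_n)\mu_k\mu_n \leq (n-1)\kappa/\phi(t_1)^2$. Under (i), $K^h \leq \kappa$ with $\kappa \geq 0$ gives $K^h \mu_k \mu_n \leq \kappa\mu_k\mu_n$; the $k = n$ term vanishes and the remaining $(n-1)$ satisfy $\mu_k \leq \mu_n$, yielding the bound. Under (ii), $K^h \geq 0$ with $\mu_k \leq \mu_n$ gives $\sum K^h \mu_k \mu_n \leq \mu_n^2 \sum_k K^h(w_k, w_n) \leq \mu_n^2 \mathrm{Ric}^h(w_n, w_n) \leq (n-1)\kappa\mu_n^2$, where $\{w_k\}$ is completed to an orthonormal basis of $T_{F(p_0)}N$ if necessary, using $K^h \geq 0$ to preserve the inequality. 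With $\phi'(t_1) = -2(n-1)\kappa$ this produces
$$(\partial_t - \Delta_{g(t_1)})Z(v,v) \geq 2(k + \mathrm{Ric}^M(g))(v,v) + \frac{-\phi'(t_1) - 2(n-1)\kappa}{\phi(t_1)} = 2(k + \mathrm{Ric}^M(g))(v,v) \geq 0,$$
contradicting the maximum principle and closing the proof. The main obstacle I anticipate is bookkeeping the correct signs in the Bochner identity for the tensor $F^*h$ under the coupled flow, and justifying the scalar maximum principle via an appropriate choice of $V$; possible degeneration of $dF$ on the kernel of $F^*h$ is harmless, since at the critical point $F(v) \neq 0$.
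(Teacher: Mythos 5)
Your proposal is correct in substance and takes essentially the same route as the paper: the paper applies Hamilton's weak maximum principle for tensors to $A=\lambda(t)g-F^*h$ with $\lambda(t)=(\Lambda^{-2}-2(n-1)\kappa t)^{-1}$, verifying the null-eigenvector condition via exactly the evolution equation for $F^*h$ and the same eigenvalue/curvature estimate ($\mu_k\le\mu_n$, cases (i) and (ii)) that you use for $Z=g-\phi(t)F^*h$; the two tensors differ only by the factor $\phi=\lambda^{-1}$. The one loose point is your endgame: at the first failure point you obtain $(\partial_t-\Delta)Z(v,v)\le 0$ while your reaction estimate only gives $(\partial_t-\Delta)Z(v,v)\ge 2(k+\mathrm{Ric})(v,v)\ge 0$, which is not a strict contradiction; this is repaired by the standard perturbation $Z_\varepsilon=Z+\varepsilon(1+t)g$ (or by invoking the tensor maximum principle with the null-eigenvector condition directly, as the paper does via \cite[Theorem A.21]{Chow-part1}).
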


\begin{proof} It is sufficient to prove the Theorem for those $t$ with   $\Lambda^{-2}-2(n-1)t>0$.
Let $H=F^*(h)$ so that $H_{ij}(x)=F^\a_i F^\b_j h_{\a\b}(F(x))$ for $x\in M$. Hence
\be\label{e-time}
\p_t H_{ij}=\lf(F^\a_{it}F^\b_j+F^\a_{i}F^\b_{tj}\ri)h_{\a\b},
\ee
because the derivative of $h$ is zero with respect to the connection on the pull-back bundle $F^{-1}(T(N))$. On the other hand,
\bee
\begin{split}
\Delta_t H_{ij}=&g^{pq}(F^\a_i F^\b_j h_{\a\b} )_{;pq}\\
=&g^{pq}\lf((F^\a_i)_{|pq} F^\b_j+F^\a_i (F^\b_j)_{|pq}+2(F^\a_i)_{|p} (F^\b_j)_{|q} \ri)h_{\a\b}
\end{split}
\eee
where $\Delta_t$ is the Laplacian with respect to $g(t)$ and $_{|p}$ denotes the covariant derivative of $D$. Using $(F^\a_i)_{|pq}=(F^\a_p)_{|iq}$, the Ricci identity, and the fact that $\tau(F)^\a=g^{pq}F^\a_{p|q}$ with $g=g(t)$, we have

\bee
\begin{split}
\Delta_t H_{ij}=&   F^\b_j\lf(  (\tau(F)^\a)_{|i}+R_{i}^l F_l^\a+g^{kl}\wt R_{\gamma\sigma \delta}\,^\a F^\delta_k F^\gamma_l F^\sigma_i\ri)h_{\a\b}\\
&+F^\a_i\lf(  (\tau(F)^\b)_{|j}+R_{j}^l F_l^\b+g^{kl}\wt R_{\gamma\sigma \delta}\,^\b F^\delta_k F^\gamma_l F^\sigma_j\ri)h_{\a\b}\\
&+2g^{pq} (F^\a_i)_{|p} (F^\b_j)_{|q} h_{\a\b}\\
=&F_j^\b (\tau(F)^\a)_{|i}+F_i^\a (\tau(F)^\b)_{|j} +R_i^lH_{lj}+R_j^lH_{il}\\
&+
2g^{kl}\wt R(u_l, u_i, u_k, u_j)+2g^{pq} (F^\a_i)_{|p} (F^\b_j)_{|q} h_{\a\b}
\end{split}
\eee
where $\widetilde \Rm$ denotes the curvature of $h$ and $u_i=F_*(\p_i)$ in local coordinates $x^i$ with $\p_i=\p_{x^i}$.
Hence
\bee
\begin{split}
\heat H_{ij}=&\lf(F^\a_{it}F^\b_j+F^\a_{i}F^\b_{tj}\ri)h_{\a\b}\\
&-\bigg[F_j^\b (\tau(F)^\a)_{|i}+F_i^\a (\tau(F)^\b)_{|j} +R_i^lH_{lj}+R_j^lH_{il}+
2g^{kl}\wt R(u_l, u_i, u_k, u_j)\\
&+2g^{pq} (F^\a_i)_{|p} (F^\b_j)_{|q} h_{\a\b}\bigg]\\
=&-\bigg[ R_i^lH_{lj}+R_j^lH_{il}+
2g^{kl}\wt R(u_l, u_i, u_k, u_j)
 +2g^{pq} (F^\a_i)_{|p} (F^\b_j)_{|q} h_{\a\b}\bigg]
\end{split}
\eee
because $F^\a_{it}=F^\a_{ti}$ and $\p_t F^\a=\tau(F)^\a$ for all $i, \a$.

If we define $A(t)=\lambda(t)g -F^*h=\lambda (t)g-H$ at time $t$ for some function $\lambda(t)$, then whenever $\lambda(t)>0$, we have
\be\label{e-evolution-A}
\begin{split}
\heat A_{ij}\geq &\lambda' g_{ij}+2\lambda k_{ij}+R_i^lH_{lj}+R_j^lH_{il}+
2g^{kl}\wt R(u_l, u_i, u_k, u_j)
 \\
 &+2g^{pq} (F^\a_i)_{|p} (F^\b_j)_{|q} h_{\a\b}\\
=&:B_{ij}.
\end{split}
\ee

We want to find $\lambda(t)>0$ so that $B_{ij}$ satisfies the null-eigenvector assumption at every point $(x,t)$, $t>0$. Namely, suppose $A_{ij}w^iw^j\ge 0$ for all $w\in T_x(M)$, and  $v$ is such that $A_{ij}v^j=0$. Then we want to prove that $B_{ij}v^iv^j\ge0$.

Let $v$ be such a vector. If $v=0$, obviously, $B_{ij}v^iv^j=0$. So we may assume that $v$ is a unit vector by scaling.  At $x$, we choose a local coordinates $x^i$ so that $p_i=\p_{x^i}$ are orthonormal and $(F(t))^*h$ is diagonalized with respect to $g(t)$ at $x$.   Since   $A(w,w)\ge0$ for all $w$ and $A(v,v)=0$, $v$ is an eigenvector of $A$. We may assume that $v=\p_1$. $A(\p_i,\p_i)\ge0$ is equivalent to say that  $||F_*(\p_i)||^2_h\le \lambda$ for all $i$. Also $A_{ij}v^j=0$ implies $H_{ij}v^j=\lambda g_{ij}v^j$.
Hence
\be\label{e-B}
\begin{split}
B_{ij}v^iv^j=&\lambda' g_{ij}v^iv^j+2\lambda k_{ij}v^iv^j+R_i^lH_{lj}v^iv^j+R_j^lH_{il}v^iv^j+
2g^{kl}\wt R(u_l, u_i, u_k, u_j)v^iv^j
 \\
 &+2g^{pq} (F^\a_i)_{|p} (F^\b_j)_{|q} h_{\a\b}v^iv^j\\
 \ge&\lambda'+2\lambda(k_{ij}+R_{ij})v^iv^j-2\sum_{i=1}^n\wt R(F_* (\p_1), F_*(\p_i), F_*(\p_i), F_*(\p_1)\\
\ge &\lambda'   -2\sum_{i=1}^n\wt R(F_* (\p_1), F_*(\p_i), F_*(\p_i), F_*(\p_1),
\end{split}
\ee
because $k+\Ric\ge0$. Here $F$ refers to $F(t)$.
Since $h(F_*(\p_i), F_*(\p_j))=\b_i\delta_{ij}$ for some $\b_i\ge0$, one can find $u_i$ which are orthonormal at $F(x)$ so that $F_*(e_i)=\b_i^\frac12 u_i$. Note that $\b_i\le \lambda$.

 Hence if the sectional curvature of $h$ is bounded above by $\kappa$,  then
\bee
\sum_{i=1}^n\wt R(F_* (\p_1), F_*(\p_i), F_*(\p_i), F_*(\p_1)=    \sum_{i=1}^n\wt R(\b_1^\frac12 u_1, \b_i^\frac12 u_i ,\b_i^\frac12 u_i, \b_1^\frac12u_1)\le (n-1)\kappa \lambda^2.
\eee
If the sectional curvature of $h$ is non-negative and $\Ric(h)\le (n-1)\kappa$, then
\bee
\sum_{i=1}^n\wt R(F_* (\p_1), F_*(\p_i), F_*(\p_i), F_*(\p_1)\le\lambda^2    \sum_{i=1}^n\wt R(  u_1,   u_i ,  u_i,  u_1)\le (n-1)\kappa \lambda^2.
\eee

Putting this back to \eqref{e-B}, we have
$$
B_{ij}v^iv^j\ge \lambda'-2(n-1)\kappa \lambda^2.
$$
In either cases, if we let
$$
\lambda(t)=\lf(\Lambda^{-2}- 2(n-1) \kappa t\ri)^{-1}.
 $$
Then $\lambda(t)>0$ as long as $\Lambda^{-2}- 2(n-1) \kappa t>0$ and $B$ satisfies the null eigenvector assumption. Moreover, $\lambda(0)=\Lambda^{-2}$. By assumption we have $A(0)\ge 0$. By the weak maximum principle for tensor \cite[Theorem A.21]{Chow-part1}, we conclude that the Theorem is true.
\end{proof}

\begin{rem}
 \begin{enumerate}
   \item[(i)]
In Theorem \ref{prop:metric-lowerBound}, if $g(t)$ is the Ricci flow, then $k=-\Ric(g)$ and $k+\Ric(g)=0$. $g(t)$ is a fixed metric, which means that  $q=0$, and $g$ has non-negative Ricci curvature, then we also have $k+\Ric(g)\ge0$. Hence the theorem can be applied to these cases. We may have corresponding results under the assumption that $k+\Ric\ge-K$, for $K\ge0$.
\item[(ii)] If the sectional curvature of $h$ is non positive, then we have $(F(t))^*(h)\le \Lambda^2 g(t)$ for all $g$.
\item[(iii)]  If $f$ is a $\Lambda$-Lipschitz map, then $\tr_{g}(f^*(h))=e(f)\le n\Lambda^2$. We may instead assume $\tr_g(f^*(h))\le n\Lambda^2$ and study the behaviour under the flow. The bound of $\tr_{g(t)}(F^*(h))$ is well-known. However, one may get the following sharp bound too. Namely, if the sectional curvature of $h$ is bounded above by $\kappa$, then $ (n\Lambda^2)^{-1}-2(n-1)\kappa t)\tr_{g(t)}(F^*(h))\le n$. One may wonder if there are similar relations of other symmetric functions of eigenvalues of $F^*(h)$.
 \end{enumerate}
\end{rem}

In the content of Ricci flow (i.e. $k=-\Ric$),  Theorem~\ref{prop:metric-lowerBound} is sharp when comparing with the following well-known scalar curvature estimate.
\begin{thm}\label{Thm: scalar curvatureLowerbdd}
Suppose $(M,g_0)$ is a compact manifold such that $\mathcal{R}(g_0)\geq \sigma_0$ on $M$ for some $\sigma_0>0$. If $g(t)$ is a smooth solution to the Ricci flow  on $M\times [0,T]$, then $T< \frac{n}{2\sigma_0}$. Moreover for all $t\in [0,T]$,
$$\mathcal{R}(g(t))\geq \frac{n\sigma_0}{n-2\sigma_0 t}.$$
\end{thm}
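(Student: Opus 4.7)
The plan is to use the evolution equation for the scalar curvature under Ricci flow together with a standard ODE comparison via the maximum principle. Under Ricci flow $\partial_t g_{ij} = -2R_{ij}$, it is classical that
\be
\frac{\partial \mathcal{R}}{\partial t} = \Delta \mathcal{R} + 2|\Ric|^2.
\ee
Applying the algebraic inequality $|\Ric|^2 \geq \frac{1}{n}\mathcal{R}^2$ (which follows from Cauchy--Schwarz on the eigenvalues of the Ricci tensor), one obtains the differential inequality
\be
\frac{\partial \mathcal{R}}{\partial t} \geq \Delta \mathcal{R} + \frac{2}{n}\mathcal{R}^2.
\ee

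Next, I would invoke the parabolic maximum principle. Let $\phi(t) = \min_{x \in M} \mathcal{R}(g(t))(x)$, which makes sense by compactness of $M$ and smoothness of $g(t)$. At a spatial minimum, $\Delta \mathcal{R} \geq 0$, so $\phi$ satisfies the ODE-type inequality $\phi'(t) \geq \frac{2}{n}\phi(t)^2$ in the barrier sense, with $\phi(0) \geq \sigma_0 > 0$. Comparing with the solution $\psi(t) = \frac{n\sigma_0}{n - 2\sigma_0 t}$ of the equality ODE $\psi' = \frac{2}{n}\psi^2$ with $\psi(0) = \sigma_0$, the standard ODE comparison (valid as long as both stay positive, which is automatic from $\phi(0) > 0$ and the sign of the inequality) yields $\phi(t) \geq \psi(t)$ on $[0,T]$, hence
\be
\mathcal{R}(g(t)) \geq \frac{n\sigma_0}{n - 2\sigma_0 t}
\ee
pointwise on $M$.

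Finally, for the upper bound on $T$: since $\psi(t) \to +\infty$ as $t \to \frac{n}{2\sigma_0}$, if $T \geq \frac{n}{2\sigma_0}$ then $\mathcal{R}(g(t))$ would have to blow up before $T$, contradicting the smoothness of the solution on $[0,T]$. Therefore $T < \frac{n}{2\sigma_0}$.

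There is no real obstacle here; the main point is simply to justify the application of the maximum principle to the scalar curvature evolution and to perform the ODE comparison carefully. The inequality $|\Ric|^2 \geq \mathcal{R}^2/n$ is the only nontrivial algebraic ingredient, and it is sharp (with equality iff $\Ric = \frac{\mathcal{R}}{n} g$), which is exactly why the resulting bound matches the Einstein shrinking sphere and complements the sharpness of Theorem~\ref{prop:metric-lowerBound} in the Ricci flow setting.
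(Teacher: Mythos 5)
Your proposal is correct and follows essentially the same route as the paper: the evolution equation $\partial_t \mathcal{R} = \Delta\mathcal{R} + 2|\Ric|^2 \geq \Delta\mathcal{R} + \tfrac{2}{n}\mathcal{R}^2$, the maximum principle with ODE comparison giving the lower bound $\tfrac{n\sigma_0}{n-2\sigma_0 t}$, and the existence-time bound from smoothness of the flow. You simply spell out the details that the paper leaves implicit.
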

\begin{proof}
It is well-known that the scalar curvature evolves by
\begin{equation}
\heat \mathcal{R}=2|\Ric|^2\geq \frac2n \mathcal{R}^2.
\end{equation}
The desired estimate follows from maximum principle. The estimate on the existence time follows from the smoothness of the flow.
\end{proof}

\section{harmonic map heat flow from Lipschitz initial data}\label{Sec: HMHF-roughtData}

In Theorem \ref{intro-mainTHM}, we want to study continuous metric $g_0$ in the domain and Lipschitz continuous map $f$ from the domain manifold into the a compact manifold with smooth metric. We want to use the results in the previous section. Hence we want to construct Ricci flow with initial data $g_0$ and harmonic map heat flow with initial map $f$. We will do this by approximation. In this section, we will first construct harmonic heat flow coupled with  Ricci flow assuming that $g_0$ is smooth and $f$ is Lipschitz.
 To be precise, let $g(t)$ be a smooth solution of Ricci flow on $M\times [0,T]$:
\begin{equation}\label{RF-smooth-initial}
\left\{
\begin{array}{ll}
\partial_t g(t)=-2\Ric(g(t));\\
g(0)=g_0.
\end{array}
\right.
\end{equation}
Let $(N, h)$ be another compact manifold with smooth metric, and let  $f: (M,g_0)\to (N,h)$ be Lipschitz continuous.

\begin{thm}\label{thm: existence-RHF}
Let $(M,g_0)$ and $(N,h)$ be two smooth compact Riemannian manifolds. Suppose the sectional curvature of $h$ is bounded from above by $1$ and $g(t)$ is a solution to \eqref{RF-smooth-initial} on $M\times [0,T]$ such that $$|\Rm(g(t))|\leq at^{-1}$$
for some $a>0$ on $(0,T]$. If $f:(M,g_0)\to (N,h)$ is a $\Lambda$-Lipschitz map. Then there exists $F(t)\in C^\infty(M,N),\;t\in (0, \min\{T,T_0\}  ]$ satisfies the harmonic map heat flow
$$
\p_t F=\tau(F)
$$
where $\tau(F)$ is the tensor field of the map $F(\cdot,t)$ with respect to the metrics $g(t), h$ such that

\begin{equation}
\left\{
\begin{array}{ll}
\lf(\Lambda^{-2}-2(n-1)t\ri)(F(t))^*h \leq g(t);\\
\sup_{x\in M}d_h\left( F(x,t),f(x)\right)\leq C_0\sqrt{t}
\end{array}
\right.
\end{equation}
for some $T_0(n,\Lambda),C_0(n,a,\Lambda,h)>0$. Moreover, for any integer $\ell\ge0$, there exists $C(n,\ell,a,h,\Lambda)>0$ such that for all $t\in (0,\min\{T,T_0\}]$,
$$|D^\ell  dF|^2\leq \frac{C(n,\ell,a,h,\Lambda)}{t^{\ell}}.$$
\end{thm}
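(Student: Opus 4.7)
The plan is to build $F$ as a smooth limit of harmonic map heat flows starting from smooth approximations of $f$, using Theorem~\ref{prop:metric-lowerBound} for the $C^1$-control and a standard Bernstein-type bootstrap for higher derivatives. Via a Nash isometric embedding $(N,h) \hookrightarrow \mathbb{R}^L$, standard mollification in a fixed atlas on $M$ combined with the nearest-point retraction onto $N$ produces smooth maps $f_\varepsilon : M \to N$ with $f_\varepsilon \to f$ uniformly and $\mathrm{Lip}_{g_0,h}(f_\varepsilon) \leq \Lambda_\varepsilon$, where $\Lambda_\varepsilon \to \Lambda$. For each such $\varepsilon$, short-time existence of $F_\varepsilon$ satisfying $\p_t F_\varepsilon = \tau(F_\varepsilon)$ with $F_\varepsilon(\cdot,0) = f_\varepsilon$ is classical since $g(t)$ and $f_\varepsilon$ are smooth and $N$ is compact.

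Applying Theorem~\ref{prop:metric-lowerBound} with $\kappa = 1$ on the maximal existence interval of $F_\varepsilon$ gives
$$
(\Lambda_\varepsilon^{-2} - 2(n-1)t)(F_\varepsilon(t))^* h \leq g(t).
$$
Set $T_0 := 1/(4(n-1)\Lambda^2)$; then for all sufficiently small $\varepsilon$, the factor on the left stays $\geq \frac{1}{2}\Lambda^{-2}$ on $[0, T_0]$, and in particular the energy $e(F_\varepsilon) = \mathrm{tr}_{g(t)}(F_\varepsilon^* h)$ is uniformly bounded. This uniform $C^1$ bound, combined with the domain curvature bound $|\Rm(g(t))| \leq a/t$ and the bounded curvature of $h$, rules out finite-time blow-up of $F_\varepsilon$ on $[0, \min\{T, T_0\}]$ by the standard continuation criterion for harmonic map heat flow. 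Higher-order estimates $|D^\ell dF_\varepsilon|^2 \leq C_\ell(n, \ell, a, h, \Lambda)\, t^{-\ell}$ are then obtained by induction on $\ell$, starting from the Bochner-type inequality
$$
\heat |dF_\varepsilon|^2 \leq C\bigl(|dF_\varepsilon|^4 + |\Rm(g)|\,|dF_\varepsilon|^2\bigr) - 2|DdF_\varepsilon|^2
$$
and its iterates, combined with the maximum principle applied to auxiliary quantities of the form $t^\ell |D^\ell dF_\varepsilon|^2 + A \sum_{k < \ell} t^k |D^k dF_\varepsilon|^2$ for sufficiently large $A$; the divergent factor $|\Rm(g)| \leq a/t$ is absorbed because $t \cdot (a/t)$ is bounded.

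Since $|\tau(F_\varepsilon)|_h^2 \leq n|DdF_\varepsilon|^2 \leq C/t$, integrating $\p_t F_\varepsilon$ along the curve $s \mapsto F_\varepsilon(x, s)$ in $N$ gives $d_h(F_\varepsilon(x, t), f_\varepsilon(x)) \leq C_0 \sqrt{t}$ with $C_0 = C_0(n, a, h, \Lambda)$. By Arzel\`a--Ascoli and the uniform higher-order estimates, a subsequence $F_{\varepsilon_k}$ converges smoothly on compact subsets of $M \times (0, \min\{T, T_0\}]$ to a smooth $F$ solving $\p_t F = \tau(F)$; passing to the limit in the metric inequality, the distance inequality, and using $f_\varepsilon \to f$ uniformly yields all three conclusions. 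The main obstacle is to carry out the Bernstein bootstrap against a Ricci-flow background whose curvature is only $a/t$-bounded: the induction must be organized so that every differentiation is balanced against the $t^\ell$-weight and against the controlled quantities from the previous step, so that the resulting constants depend only on $(n, \ell, a, h, \Lambda)$ and not on $\varepsilon$.
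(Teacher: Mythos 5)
Your proposal follows essentially the same strategy as the paper: mollify the Lipschitz map (the paper uses a Greene--Wu style smoothing, which like yours embeds $N$ in Euclidean space and projects back), run the harmonic map heat flow coupled with $g(t)$ from the smooth approximations, establish uniform estimates, and pass to a smooth limit on $M\times(0,\min\{T,T_0\}]$, obtaining the pullback inequality from Theorem~\ref{prop:metric-lowerBound} and the $\sqrt{t}$ distance bound from $|\tau|\le Ct^{-1/2}$. The one genuine divergence is how the uniform existence time is obtained: the paper simply invokes the existence theorem of Huang--Tam for harmonic map heat flow coupled with metrics satisfying $|\Rm(g(t))|\le at^{-1}$, which already supplies $T_0(n,\Lambda)$ together with energy-density and tension-field bounds, whereas you start from classical short-time existence for smooth data and rule out blow-up before $T_0=\frac{1}{4(n-1)\Lambda^2}$ by feeding the monotonicity of Theorem~\ref{prop:metric-lowerBound} (with $\kappa=1$) into the standard continuation criterion. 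Your route is more self-contained and makes nice use of the paper's own new estimate, at the cost of having to justify the continuation criterion and the full Bernstein bootstrap yourself; the paper's route outsources exactly those points. One item you should make explicit in the bootstrap: beyond $|\Rm(g(t))|\le at^{-1}$, the induction for $|D^{\ell}dF|^2\le C_\ell t^{-\ell}$ requires bounds on the derivatives of the curvature of $g(t)$, namely Shi's estimates $|\nabla^{k}\Rm(g(t))|\le C(n,k,a)\,t^{-(1+\frac{k}{2})}$, which hold precisely because $g(t)$ is a Ricci flow with $|\Rm|\le at^{-1}$; without this the constants could depend on $g_0$ rather than only on $(n,\ell,a,h,\Lambda)$, which would ruin the application where the approximating initial metrics have unbounded curvature. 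With that point added, your argument is complete and equivalent in substance to the paper's proof.
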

We should remark that the existence time and the estimates of $F$ do not depend on the curvature of the initial metric $g_0$, as long as $|\Rm(g(t))|\leq at^{-1}$ is true. This is important in application.

The theorem will follow from the results for smooth $f$ by approximation. First we have the following:
\begin{lma}\label{lma:approximation}
Suppose $(M ,g)$ and $(N,h)$ are smooth compact Riemannian manifolds and $f:(M,g)\to (N,h)$ is a continuous map such that $f$ is Lipschitz continuous. Then for any $\e>0$, there exists a smooth map $f_\e:(M,g)\to (N,h)$ such that
$$\mathrm{Lip}_{g,h}(f_\e)\leq (1+\e)\mathrm{Lip}_{g,h}(f)\quad\text{and}\quad \sup_{x\in M} d_h\left(f_\e(x),f(x) \right)<\e.$$
\end{lma}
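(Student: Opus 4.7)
The plan is to lift $f$ into an ambient Euclidean space via a Nash isometric embedding of $N$, smooth the lifted $\R^K$-valued map by convolution there, and then project back onto $N$ via the nearest-point projection. The quantitative issue is preserving the Lipschitz constant up to a factor $1+\varepsilon$; this loss will come from two multiplicative contributions, one in the smoothing and one in the projection, each of which can be made arbitrarily close to $1$.

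First, fix a smooth isometric embedding $\iota\colon (N,h)\hookrightarrow\R^K$. Since extrinsic distance is dominated by intrinsic distance, $\iota\circ f\colon M\to\R^K$ is Lipschitz with constant $L_0\le L:=\mathrm{Lip}_{g,h}(f)$. On some tubular neighborhood $\mathcal{T}_{\delta_0}$ of $\iota(N)$ the nearest-point projection $\pi\colon\mathcal{T}_{\delta_0}\to\iota(N)$ is smooth; at $y\in\iota(N)$, $d\pi_y$ is the orthogonal projection of $\R^K$ onto $T_y\iota(N)$, so its operator norm (using $h$ on the target) equals $1$. By compactness of $\iota(N)$ and continuity, for any $\eta>0$ there is $\delta\in(0,\delta_0)$ with $\|d\pi_y\|\le 1+\eta$ throughout $\mathcal{T}_\delta$, and hence $\mathrm{Lip}(\pi|_{\mathcal{T}_\delta})\le 1+\eta$ with respect to the Euclidean distance on the source and $d_h$ on the target.

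To smooth $\iota\circ f$, I would cover $M$ by finitely many normal coordinate balls of small radius $r$ on which $g$ is arbitrarily close to the Euclidean metric, fix a subordinate smooth partition of unity $\{\chi_\alpha\}$, convolve the coordinate representation of $\iota\circ f$ in each chart with a standard Euclidean mollifier of width $s\ll r$ to get smooth approximations $\tilde f^{(\alpha)}$, and set $F_s:=\sum_\alpha \chi_\alpha \tilde f^{(\alpha)}$. Euclidean mollification preserves Lipschitz constants exactly in each chart, while the partition-of-unity cross-term $\sum_\alpha \tilde f^{(\alpha)}\nabla\chi_\alpha=\sum_\alpha(\tilde f^{(\alpha)}-\iota\circ f)\nabla\chi_\alpha$ has uniform norm $O(s)$ because $\|\tilde f^{(\alpha)}-\iota\circ f\|_{C^0}=O(s)$ and $\{\chi_\alpha\}$ is fixed. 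Thus $\mathrm{Lip}_g(F_s)\le(1+\varepsilon_1)L_0$ with $\varepsilon_1\to 0$ as $s,r\to 0$, while $\|F_s-\iota\circ f\|_{C^0}=O(s)$.

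Finally, choosing $s$ so small that $\|F_s-\iota\circ f\|_{C^0}<\delta$ ensures $F_s(M)\subset\mathcal{T}_\delta$, and I set $f_\varepsilon:=\iota^{-1}\circ\pi\circ F_s$. This is smooth and satisfies
\bee
\mathrm{Lip}_{g,h}(f_\varepsilon)\le \mathrm{Lip}(\pi|_{\mathcal{T}_\delta})\cdot\mathrm{Lip}_g(F_s)\le(1+\eta)(1+\varepsilon_1)L,
\eee
which is at most $(1+\varepsilon)L$ once $\eta,\varepsilon_1$ are small enough, while $\sup_M d_h(f_\varepsilon,f)<\varepsilon$ follows from $\pi\circ\iota=\mathrm{id}_N$ together with continuity of $\pi$. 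The main obstacle is the quantitative smoothing step: curvature corrections from working in normal coordinates and the partition-of-unity cross-terms must both be shown to contribute only $(1+o(1))$ multiplicative losses on the Lipschitz constant, which is what forces the simultaneous choice of a small chart radius $r$ and a much smaller mollification scale $s$.
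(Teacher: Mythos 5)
Your overall strategy is the same as the paper's (which follows Greene--Wu): isometrically embed $N$ into $\mathbb{R}^K$, smooth the $\mathbb{R}^K$-valued map with an arbitrarily small loss in the Lipschitz constant, and push back onto $N$ by the nearest-point projection, whose differential has norm close to $1$ on a thin tube. The only genuine difference is the smoothing device: the paper mollifies intrinsically via the Greene--Wu convolution $\mathbf{u}_\varepsilon(p)=\varepsilon^{-n}\int_{T_pM}\mathbf{u}(\exp_p v)\rho(|v|/\varepsilon)\,dV_p$, controlling the derivative by comparing geodesics through parallel transport, whereas you mollify chart by chart in normal coordinates and glue with a partition of unity, paying a $(1+o_r(1))$ metric-comparison factor plus an additive $O(s)$ cross-term $\sum_\alpha(\tilde f^{(\alpha)}-\iota\circ f)\otimes d\chi_\alpha$. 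Both work; the intrinsic convolution avoids cross-terms and gives a single global formula, while your route avoids the parallel-transport/geodesic-variation estimate at the cost of fixing $r$ first and then sending $s\to0$ (note the cross-term is absorbed multiplicatively only when $\mathrm{Lip}_{g,h}(f)>0$; the constant-map case should be dispatched trivially). One intermediate claim should be rephrased: it is not true that $\pi|_{\mathcal{T}_\delta}$ is $(1+\eta)$-Lipschitz from the \emph{Euclidean} distance on the tube to $d_h$ on $N$ (for a circle in $\mathbb{R}^2$, nearly antipodal points give ratio close to $\pi/2$, since straight segments between distant points leave the tube). This is harmless for your argument: what you actually need, and what you do state, is the pointwise bound $\|d\pi_y\|\le 1+\eta$ on $\mathcal{T}_\delta$; then $\|df_\varepsilon\|\le(1+\eta)\|dF_s\|$ by the chain rule, and integrating along nearly minimizing $g$-paths in $M$ gives $\mathrm{Lip}_{g,h}(f_\varepsilon)\le(1+\eta)(1+\varepsilon_1)\mathrm{Lip}_{g,h}(f)$, exactly as in the paper's reduction. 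With that rewording, and the uniform-continuity argument you indicate for $\sup_M d_h(f_\varepsilon,f)<\varepsilon$, the proof is complete.
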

\begin{proof} This is basically a well-known result by Greene-Wu \cite{GreeneWu1973}, see also \cite[Theorem 1.3]{KondoTanaka2017}. For completeness, we sketch the proof. First we isometrically embed $N$ to $\R^K$ for some large $K>0$. In this setting, the map $f$ can be expressed as a vector valued function $\mathbf{u}: M\to N\subset \R^K$. It is easy to see that $\mathbf{u}$ is   Lipschitz  with Lipschitz constant $L=: \mathrm{Lip}_{g,h}(f)$. For any $\eta>0$ suppose we can find a smooth function $\mathbf{v}$ such that $\sup_{x\in M}|\mathbf{u}(x)-\mathbf{v}(x)|\le \eta$ and $|D\mathbf{u}(w)|\le (L+\eta)|w|_g$ for all $w\in T(M)$, then the result follows. In fact, it this is true, then $\mathbf{v}(x)$ will be in the $\eta$ neighbourhood of $N$ in $\R^K$. If   $\pi\circ \mathbf{v}$ will be the required map if $\eta$ is small enough, where $\pi$ is the nearest point projection from $\R^K$ to $N$.

To find $\mathbf{v}$, we follows    the idea in \cite{GreeneWu1973}.
 Let $\rho: \R\to \R$ be a
non-negative smooth  function having support contained in $[-1, 1]$, so that
$$
\int_{\R^n} \rho(|x|) dx=1.
$$
Define
\be\label{e-convolution}
\mathbf{u}_\e (p)=\e^{-n}\int_{v\in T_p(M)}\mathbf{u}(\exp_p(v))\cdot \rho\lf(\frac{|v|}\e\ri)dV_p.
\ee
$dV_p$ is the volume form with respect  to $g|_{T_p(M)}$.
For $\e$ small enough, $\mathbf{u}_\e$ is smooth and $\bf{u}_\e\to \mathbf{u}$ uniformly as $\e\to0$. Let $p\in M$ and let $w\in T_p(M)$ with $||w||=1$. Let $\gamma(t)$ be the unique minimal geodesic with $\gamma(0)=p$, $\gamma'(0)=w$, $0\le t\le a$. We may fix $a$ so that $B_x(2a)$ is convex for all $x\in M$. Then
$$
 D\mathbf{u}_\e(w)=\lim_{t\to0}\frac1t\lf(\mathbf{u}_\e(\gamma(t))-\mathbf{u}_\e(\gamma(0)\ri).
$$

Now,
\bee
\begin{split}
&\mathbf{u}_\e(\gamma(t))-\mathbf{u}_\e(\gamma(0)\\
=&\e^{-n}\int_{v\in T_{\gamma(t)}(M)}\mathbf{u}(\exp_{\gamma(t)}(v))\cdot \rho\lf(\frac{|v|}\e\ri)dV_{\gamma(t)} -\e^{-n}\int_{v\in T_p(M)}\mathbf{u}(\exp_p(v))\cdot \rho\lf(\frac{|v|}\e\ri)dV_p\\
=&\e^{-n}\int_{v\in T_{p}(M), |v|\le \e}\lf(\mathbf{u}(\exp_{\gamma(t)}(P_t(v)))- \mathbf{u}(\exp_p(v))\ri) \rho\lf(\frac{|v|}\e\ri)dV_p
\end{split}
\eee
where $P_t$ is the parallel translation along $\gamma$ from $\gamma(0)$ to $\gamma(t)$. Hence
by Minkowski integral inequality \cite[Theorem 202]{Polya}, we have

\bee
\begin{split}
&|\mathbf{u}_\e(\gamma(t))-\mathbf{u}_\e(\gamma(0)|\\
\le &\e^{-n}\int_{v\in T_{p}(M), |v|\le \e}|\lf(\mathbf{u}(\exp_{\gamma(t)}(P_t(v))- \mathbf{u}(\exp_p(v))\ri)| \rho\lf(\frac{|v|}\e\ri)dV_p\\
\le &\sup_{v\in T_p(M), |v|\le \e}|\lf(\mathbf{u}(\exp_{\gamma(t)}(P_t(v))- \mathbf{u}(\exp_p(v))\ri)|\\
\le &L\sup_{v\in T_p(M), |v|\le \e}|d_g \lf(\mathbf{u}(\exp_{\gamma(t)}(P_t(v)), \mathbf{u}(\exp_p(v))\ri)|.
\end{split}
\eee

For each $t$, let $\gamma(t,s)$ be the geodesic from $\gamma(t)$ with tangent vector $P_t(v)$, $0\le s\le 1$. Then $|\p_t \gamma|_{s=0}=1$. Hence for any $\eta>0$, there is $\e>0$ such that if $|v|\le \e$, then $|\p_t\gamma|\le 1+\e$ for all $0\le s\le 1$. By compactness and the fact that $\exp$ is smooth, and   solutions of ODE depends smoothly on initial data, one can see that $\e>0$ can be chosen for all $p, w\in T_p(M)$ with $|w|=1$ and $v\in T(M)$ with $|v|\le \e$ we have $|\p_t\gamma|\le 1+\e$.
This implies that the length of the curve $\gamma(1,t)$, $0\le t\le t_0$ is less than or equal to $(1+C\e^2)t_0$. So
$$
d_g \lf(\mathbf{u}(\exp_{\gamma(t)}(P_t(v)), \mathbf{u}(\exp_p(v))\ri)\le (1+\e)t.
$$
From this we conclude that $ |D\mathbf{u}_\e(w)|\le L(1+\e)$ and the   result follows.
\end{proof}

\begin{proof}[Proof of Theorem \ref{thm: existence-RHF}] Let $L=\mathrm{Lip}_{g_0,h}(f)$ so that $L\leq \Lambda$. By Lemma \ref{lma:approximation}, there exist smooth maps $f_i$ from $M$ to $N$ so that $$\mathrm{Lip}_{g_0,h}(f_i)\leq (1+\e_i)L\quad\text{and}\quad \sup_{x\in M} d_h\left(f_i(x),f(x) \right)<\e_i$$ with $\e_i\to 0$ as $i\to\infty$. In particular, the energy density of the maps $f_i$ are uniformly bounded by $(n+1)L^2$, say if $i$ is sufficiently large. By \cite[Theorem 1.1]{HuangTam2021}, for each $i$ there is a solution $F_i$ to the harmonic heat flow coupled with $g(t)$ on $M\times[0,\min\{T,T_0\}]$
for some $T_0(n,\Lambda)$ so that for any integer $\ell\ge0$, there exists $C(n,\ell,a,h,\Lambda)>0$ such that for all $t\in (0,\min\{T,T_0\}]$,

\be\label{e-derivativesF}
|D^\ell  dF_i|^2\leq \frac{C(n,\ell,a,h,\Lambda)}{t^{\ell}}.
\ee
In \cite{HuangTam2021}, only the energy density and the tension field have been estimated. But the higher order estimates of $|D^\ell d F_i|^2$ can be done similarly using Bernstein-Shi trick as in \cite{ChenZhu2006}.  The only difference is that in \cite{ChenZhu2006}, $|\Rm(g(t)|$ is uniformly bounded and in our case, $|\Rm(g(t))|\le at^{-1}$. So we give a sketch of proof here. By the estimates of Shi \cite{Shi1989}, $|\nabla^k\Rm(g(t))|\le Ct^{-(1+\frac k2)}$ for any integer $k\ge0$, where $C$ depends only on $a, n, k$.
Denote $|D^\ell dF_i|^2$ by $P_\ell$, where $P_0$ is just the energy density. So \eqref{e-derivativesF} is true for $\ell=0$. Suppose the estimates are true up to $\ell-1$. We will use $C_i$ to denote any constants depending only on $n,T,a,\Lambda,h,\ell$.  By a direct computation \cite[Lemma 2.10 and p.141]{ChenZhu2006}, we have
\bee
\heat P_\ell\le -2P_{\ell+1}+C_1\lf(t^{-1}P_\ell+t^{-1-\frac\ell2}P_\ell^\frac12\ri).
\eee
Hence whenever $P_{\ell}>0$, we have
\bee
\heat (t^{ \frac{1+\ell}2}P^\frac12_\ell) \le C_3(t^{\frac{\ell-1}2}P^\frac12_\ell+t^{-\frac12})
\eee
On the other hand,
\bee
\begin{split}
\heat P_{\ell-1}\le &-2P_{\ell}+C_4t^{-\ell}
\end{split}
\eee
and so
\bee
\heat (t^{\ell-\frac12}P_{\ell-1})\le-2t^{\ell-\frac12}P_\ell+C_5t^{-\frac12}.
\eee
Let $G=t^{\frac{1+\ell}2}P^\frac12+t^{\ell-\frac12}P_{\ell-1}-\a t^\frac12$, for $\a>0$, we have:
\bee
\begin{split}
\heat G\le & C_3(t^{\frac{\ell-1}2}P^\frac12_\ell+t^{-\frac12})-2t^{\ell-\frac12}P_\ell+C_5t^{-\frac12}-\frac12\a t^{-\frac12}\\
\le &C_6t^{-\frac12}-\frac12\a t^{-\frac12}.
\end{split}
\eee
Let $\a$ be such that $\frac12\a=C_1+1$. We have
$$
\heat G<0
$$
at the points where $P_\ell>0$. Since $G=0$ at $t=0$, we conclude that $G\le 0$.
Hence
$$
t^{\frac\ell 2}P_{\ell}^\frac12\le C_7.
$$
This completes the proof of the estimates \eqref{e-derivativesF}.

Since $|\tau(F_i)|\le |DdF_i|\le Ct^{-\frac12}$, we conclude that
$$
d_h(F_i(x,t),f(x))\le C_8t^\frac12.
$$
 Since $N$ is compact, we can find a subsequence of $F_i$ which converges uniformly in $C^\infty$ to a map $F$ in compact subsets of $M\times(0,\min\{T,T_0\}]$. Hence $F$ satisfies the harmonic heat flow with estimates of $|D^\ell dF|$ in the theorem. Moreover,
$$
d_h(F(x,t),f(x))\le C_8t^\frac12.
$$

Finally, apply Theorem \ref{prop:metric-lowerBound} and
pass it to limit, we have
$$
(\Lambda^{-2}-2(n-1)t)(F(t))^*h\le g(t).
$$
This completes the proof of the theorem.

\end{proof}

\section{Proof of Theorem~\ref{intro-mainTHM}}\label{Sec:Proof-Main}

In this section, we will construct a solution to the harmonic map heat flow coupled with the Ricci flow where both $f$ and $g_0$ are allowed to be non-smooth. Due to the weak parabolicity of the Ricci flow,  there will be some technical issue when discussing the time zero regularity of the Ricci flow if the initial data is only $C^0$. To avoid this, we will work on the Ricci-Deturck $G$-flow instead, where $G$ is a smooth background metric.

A smooth family of metrics $ g(t)$ on $M\times (0,T]$ is said to be a solution to the $G$-flow if it satisfies
\begin{equation}\label{equ:h-flow}
\left\{
\begin{array}{ll}
\partial_t g_{ij}=-2R_{ij}+\nabla_i W_j +\nabla_j W_i;\\
W^k= g^{pq}\left( \Gamma^k_{pq}-\wt\Gamma^k_{pq} \right)
\end{array}
\right.
\end{equation}
where $\Gamma$ and $\wt\Gamma$ denote the connections of $g(t)$ and $G$ respectively.
If the initial metric $g_0$ is smooth, it is well-known that the Ricci flow is equivalent to the Ricci-Deturck $G$-flow in the following sense. Let $\Phi_t$ be the diffeomorphism given by
\begin{equation}\label{e-Phi}
\left\{
\begin{split}
\partial_t\Phi_t(x)&=-W\left(\Phi_t(x),t \right);\\
\Phi_0(x)&=x.
\end{split}
\right.
\end{equation}
Then the pull-back of the Ricci-Deturck flow $ \wh g(t)=\Phi_t^*  g(t)$ is a Ricci flow solution with $\wh g(0)= g(0)=g_0$.

\begin{proof}[Proof of Theorem~\ref{intro-mainTHM}] Let $g_0$ be a $C^0$ metric on $M$ with scalar curvature at least $n(n-1)$ in the sense of Definition \ref{defn-C0-scalar}. Then there exist a sequence of smooth metrics $g_{i,0}$ with $\cR(g_{i,0})\ge n(n-1)$ so that $g_{i,0}\to g_0$  in $C^0$-norm. We may assume that
\be\label{e-limitmetrics}
\left(1+\frac1i \right)^{-1}g_{i,0}\le g_0\le \left(1+\frac1i \right)g_{i,0}.
\ee

Fix $i_0$ large enough  only on $n$ and let $G=:g_{i_0}$ so that we can apply the results by  M. Simon \cite{Simon2002} to obtain a constant  $T>0$ depending only on $n$ and $G=:g_{i_0}$ such that the $G$-flow \eqref{equ:h-flow} with initial data $g_{i,0}$ has a solution defined on $M\times[0,T]$ which is smooth up to $t=0$. Denote this solution by $g_i(t)$. Moreover,
by \cite{Simon2002}, for any $\ell\in \mathbb{N}$, there is $C_\ell>0$ depending only on $n,k_0,...,k_{\ell+1}$ so that
\be\label{e-derivativesbdd}
\left\{
  \begin{array}{ll}
    \frac12(1+\frac1{i_0})^{-1}g_i\le G\le 2(1+\frac1{i_0}) g_i;\\
\sup_M |\wt \nabla^\ell g_i(t)|\leq \displaystyle\frac{C_\ell}{t^{\ell/2}},
  \end{array}
\right.
\ee
where $\wt \nabla$ is the covariant derivative with respect to $G$ and $k_m$ is the bound of the norm of the $m$-th derivative  of the curvature tensor of $G$. Passing to some subsequence, we may assume that $g_i(t)$ will converge to a solution $g(t)$ to the $G$-flow on $M\times(0, T]$. Moreover, by the work of Simon \cite{Simon2002}, $g(t)$ converges to $g_0$ in $C^0$ norm as $t\to0$. Since $\cR(g_{i,0})\ge n(n-1)$,  Theorem \ref{Thm: scalar curvatureLowerbdd} implies
\be\label{e-scalar}
\cR\lf(\lambda^{-1}(t)g(t)\ri)\ge  n(n-1).
\ee
where $\lambda(t)=1-2(n-1)t$ as long as $\lambda(t)>0$. We may assume that this is true for $0\le t\le T$ by choosing a smaller $T>0$. We denote $\check g(t)=\lambda(t)^{-1}g(t)$.

Next we want to construct "harmonic" map heat flow coupled with $g(t)$. For each $i$, let $W^{(i)}$ be the vector field  given by \eqref{equ:h-flow} with $\Gamma=\Gamma^{(i)}$ to be the connection of $g_i(t)$ and let $\Phi_{t}^{(i)} $ be as in \eqref{e-Phi} with respect to $W^{(i)}$. Let $\wh g_i(t)=(\Phi_{t}^{(i)} )^*(g_i(t))$. Then $\wt g_i(t)$ is a smooth solution of the Ricci flow in $[0,T]$ such that $\wh g_i(0)=g_{i,0}$. By \eqref{e-limitmetrics} and the assumption that $f$ is a 1-Lipschitz map from $(M,g_0)$ to $(N,h)$, we conclude that $f$ is a $(1+\frac1i)$-Lipschitz maps from $(M,g_{i,0})$ to $(N,h)$. By \eqref{e-derivativesbdd}
and the fact that $\wh g_i(t)$ is isometric to $g_i(t)$, we conclude that $|\Rm(\wh g_i(t)|\le at^{-1}$ on $M\times(0,T]$ for some $a>0$ independent of $i$. By Theorem \ref{thm: existence-RHF}, we can solve the harmonic heat flow $\wh F^{(i)}(t)$ coupled with the Ricci flow $\wh g_i(t)$ in $M\times [0,T]$ (with a possibly smaller $T$ but  independent of $i$) with the following properties:

\begin{enumerate}
  \item [(i)] $\wh  F^{(i)}(t)$ is a $L^i(t)$-Lipschitz map from $(M, \wh g_i(t))$ to $(N,h)$, with
      $$
      L^i(t)=\frac{(1+\frac1i)^2}{1-2(n-1)(1+\frac1i)^2t}.
$$
  \item [(ii)] For each $\ell\ge0$,   $|D^{\ell}d \wh F^{(i)}(t)|^2\le C_{\ell}t^{-\ell}$.
      \item[(iii)] $\wh  F^{(i)}(0)=f$.
\end{enumerate}

Let $F^{(i)}(x,t)=\wh  F^{(i)}\left(\left(\Phi_t^{(i)}\right)^{-1}(x),t\right)$. Then  using \eqref{e-Phi}, $F^{(i)}$ satisfies:
\begin{equation}\label{e-Fi}
\begin{split}
\partial_t F^{(i)}(x,t)&= \partial_t \left(\hat F^{(i)} ((\Phi^{(i)}_t)^{-1}(x)  \right)\\
&=\tau( \wh F^{(i)})+d\wh F^{(i)}\left(\partial_t (\Phi^{(i)}_t)^{-1}(x)  \right)\\
&= \tau(\wh F^{(i)}) +d\hat F_i(W ^{(i)})| \left(\Phi_{i,t}^{-1}(x)\right)\\
&=\tau( F^{(i)}) +dF^{(i)}(W^{(i)}).
\end{split}
\end{equation}
Moreover, since $\Phi^{(i)}_t$ is an isometry between $\wh g_{i}(t)$ and $g_i(t)$, we know that
\begin{enumerate}
  \item [(i)] $F^{(i)}(t)$ is a $L^i(t)$-Lipschitz map from $(M,  g_i(t))$ to $(N,h)$.
  \item [(ii)] For each $\ell\ge0$,   $|D^{\ell}d   F^{(i)} (t)|^2\le C_{\ell}t^{-\ell}$ where  $D$ is the covariant derivative with respect to $g_i(t)$ and $h$ on the pull-back bundle $(F^{(i)})^{-1}(T(N))$.
      \item[(iii)] $F^{(i)}(0)=f$ because $\Phi^{(i)}_0(x)=x$.
\end{enumerate}
By \eqref{e-Fi}, $|\tau(F^{(i)})|_h $ is bounded by $Ct^{-\frac12}$, $|dF^{(i)} |_{g_i(t),h}$    is bounded uniformly and $|W_i|_{g_i(t)}\leq Ct^{-1/2}$,  for some $C$ independent of $i, x, t$ using \eqref{e-derivativesbdd}.  By computing the length of the curve $F(x,s): 0\le s\le t$ in $N$,  we conclude  that for all $x\in M$,
\begin{equation}\label{e-distance}
d_h\left(F_i(x,t),f(x) \right)\leq  C(n,h,G) \sqrt{t}.
\end{equation}
  On the other hand, by (ii) above,  \eqref{e-derivativesbdd} and passing to a subsequence, $F_i$ will converges to a smooth map $F(x,t)$ from $M\times(0,T_1)$ to $(N,h)$ for some $T_10$.  Hence
  $F(t)$ is a 1-Lipschitz map from $(M,\check g(t))$ to $(N,h)$. The scalar curvature of $\check g(t)$ is at least $n(n-1)$ by Theorem \ref{Thm: scalar curvatureLowerbdd}. On the other hand, by \eqref{e-distance}, we conclude that $\sup_M d_h(F(x,t),f(x))\to 0$ as $t\to 0$. Hence $F(x,t)$ will be homotopy to $f(x)$ if $t$ is small enough. This implies $F(x,t)$ has non-zero degree.
  Now the result of Llarull \cite{Llarull1998} applies to show that $F(t)$ is a metric isometry and $\check g(t)$ is the standard sphere via $F(t)$ so that for all $x,y\in M$ and $t\in (0,T]$,
$$d_h\left( F_t(x),F_t(y)\right)=d_{\check g(t)}(x,y).$$
By letting $t\to 0$ using the fact that $g(t)$ converges uniformly to $g_0$ as $t\to 0$,   we conclude that $f$ is a distance isometry.  This completes the proof.
\end{proof}
\bigskip

With the Ricci flow smoothing, we can slightly improve the result in \cite{CecchiniHankeSchick2022} when $g_0$ is of slightly better regularity and of scalar curvature lower bound in the sense of Lee-LeFloch \cite{LeeLeFloch2015}. We recall the definition of distributional scalar curvature lower bound first.

\begin{defn}[\cite{LeeLeFloch2015}]\label{R-lower-dist}
Let $M$ be a smooth manifold with background metric $h$. A $W^{1,2}_{loc}\cap L^\infty$ Riemannian metric $g$ is said to have $\mathcal{R}(g)\geq \sigma$ for $\sigma\in \mathbb{R}$ in the sense of distribution if $\langle \mathcal{R}(g),\varphi\rangle \geq \sigma \int_M \varphi \; d\mathrm{vol}_h$ for all non-negative test function $\varphi\in C^\infty_{loc}(M)$ where
\begin{equation}
\begin{split}
\langle \mathcal{R}(g_0),\varphi\rangle=\int_M -\left\langle V,\tilde \nabla (\varphi \cdot \sqrt{\frac{\det g_0}{\det h}})\right\rangle_h+ F\varphi  \cdot \sqrt{\frac{\det g_0}{\det h}}\; d\mathrm{vol}_h
\end{split}
\end{equation}
with
\begin{equation}
\left\{
\begin{array}{ll}
\Psi^k_{ij}=\frac12 g^{kl} \left(\tilde \nabla_i g_{kl}+\tilde\nabla_j g_{il}-\tilde\nabla_l g_{ij} \right);\\
V^k=g^{ij}\Psi_{ij}^k -g^{ik}\Psi_{ji}^j;\\
F=\tr_g \tilde\Ric - \Psi_{ij}^k\tilde\nabla_k g^{ij} + \Psi^i_{jl}\tilde\nabla_k g^{ik}+g^{ij} \left(\Psi^k_{kl}\Psi^l_{ij} -\Psi^k_{jl}\Psi^l_{ik} \right)
\end{array}
\right.
\end{equation}
Here $\tilde\nabla$ denotes the connection with respect to the background metric $h$.
\end{defn}

With the slightly stronger assumption, we might conclude a slightly better regularity of $g_0$.
\begin{thm}\label{Thm:better-regular}
Under the assumption in Theorem~\ref{intro-mainTHM}, if in addition $g_0\in W^{1,p}(M),p>n$ satisfies $\mathcal{R}(g_0)\geq n(n-1)$ in the sense of Lee-LeFloch, then there exists a $C^{1,\a}$ diffeomorphism $\Psi$ of $M$ for some $\a>0$ so that $\Psi^*g_0$ is the standard spherical metric on $\mathbb{S}^n$.
\end{thm}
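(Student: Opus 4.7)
\emph{Proof plan.} The strategy is to revisit the proof of Theorem~\ref{intro-mainTHM} and upgrade the convergence $F(t)\to f$ from $C^0$ to $C^{1,\a}$ by exploiting the improved time-zero regularity of the Ricci--DeTurck flow for $W^{1,p}$ data. First, by the result of Jiang--Sheng--Zhang \cite{JiangShengZhang2021} (case (b) in the introduction), the distributional lower bound $\cR(g_0)\ge n(n-1)$ in the sense of Lee--LeFloch implies $g_0\in\ol G_{n(n-1)}$, so Theorem~\ref{intro-mainTHM} is applicable. This produces a smooth family of maps $F(t):M\to\mathbb{S}^n$, $t\in(0,T]$, which are Riemannian isometries $F(t)^*h=\check g(t)$ (with $\check g(t)=\lambda(t)^{-1}g(t)$), and which converge uniformly to $f$ as $t\to 0$.

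The key additional ingredient is that, when $g_0\in W^{1,p}$ with $p>n$, one can arrange the Simon smoothing \cite{Simon2002} (using the $W^{1,p}$ refinements also exploited in \cite{JiangShengZhang2021}) so that the Ricci--DeTurck flow $g(t)$ (hence also $\check g(t)$) enjoys uniform $C^{0,\a}$ bounds up to $t=0$ for some $\a=\a(n,p)>0$, together with $\check g(t)\to g_0$ in $C^{0,\a'}$ for every $\a'<\a$. Given these bounds, the identity $F(t)^*h=\check g(t)$ together with Schauder estimates --- applied either directly to the ``isometric immersion'' relation $F^i_aF^j_b\,h_{ij}(F)=\check g_{ab}$ in local coordinates, or equivalently to $F(t)$ viewed as a smooth harmonic map between $(M,\check g(t))$ and $(\mathbb{S}^n,h)$ in $\check g(t)$-harmonic coordinates --- yields uniform $C^{1,\a}$ estimates on $F(t)$ that persist as $t\downarrow 0$.

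By Arzel\`a--Ascoli, one extracts a subsequence $F(t_k)\to F_0$ in $C^{1,\a'}$; the already-known uniform convergence $F(t)\to f$ identifies $F_0=f$, so $f\in C^{1,\a'}$. Passing $F(t)^*h=\check g(t)$ to the limit gives $f^*h=g_0$ pointwise, and since $g_0$ is pointwise positive-definite, $df$ is everywhere invertible. Combined with the fact that $f$ is already a distance isometry (hence a homeomorphism) by Theorem~\ref{intro-mainTHM}, the $C^{1,\a'}$ inverse function theorem shows that $f$ is a global $C^{1,\a'}$ diffeomorphism. Setting $\Psi=f^{-1}$ then gives the claim, with $\Psi^*g_0=h$ as required.

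The main obstacle is the uniform $C^{1,\a}$ control of $F(t)$ up to $t=0$: it requires (i) the enhanced $C^{0,\a}$ regularity of the Ricci--DeTurck flow with $W^{1,p}$ initial data and (ii) Schauder-type estimates for the isometry equation when the domain metric is only $C^{0,\a}$. In contrast with Theorem~\ref{intro-mainTHM}, where only $C^0$ convergence of the flow is needed, here the behaviour of the flow at $t=0$ is essential, and one must track carefully that although the higher derivative bounds $|D^\ell dF(t)|\le C_\ell t^{-\ell/2}$ of Theorem~\ref{thm: existence-RHF} blow up as $t\downarrow 0$, the $C^{1,\a}$ bound coming from the \emph{isometry} equation does not.
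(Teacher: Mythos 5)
Your route is viable but genuinely different from the paper's. The paper never takes a $C^{1,\a}$ limit of the maps $F(t)$ themselves: instead it uses $\Ric(\check g(t))=n-1$ to write the evolution of $\check g(t)=\lambda(t)^{-1}g(t)$ as a pure Lie-derivative flow along the DeTurck vector field $\check W$, integrates $\check W$ to get diffeomorphisms $\Psi_t$ of $M$ with $\check g(t)=\Psi_t^*\check g(T)$ (and $\check g(T)$ round), and bounds $\Psi_t$ uniformly in $W^{2,p}$ as $t\to 0$ via the Calabi--Hartman identity \eqref{local-DIFF-CalabiHartman} combined with the improved flow estimate $\int_M|\wt\nabla g(t)|^p\le C$ of \eqref{Improve-e-derivativesbdd} (from \cite{JiangShengZhang2021,ShiTam2016}); Sobolev embedding then yields a $C^{1,\a}$ limit diffeomorphism. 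You instead extract a $C^{1,\a}$ limit of the Llarull isometries $F(t)$ and set $\Psi=f^{-1}$; your endgame (identifying the limit with $f$, passing $F(t)^*h=\check g(t)$ to the limit to get $f^*h=g_0$, invertibility of $df$, inverse function theorem) is sound, and this variant is arguably more direct, at the price of needing uniform $C^{1,\a}$ control of $F(t)$ up to $t=0$.

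That uniform $C^{1,\a}$ bound is the step you have not actually justified. As stated --- ``Schauder estimates applied directly to the isometric immersion relation $F^i_aF^j_b h_{ij}(F)=\check g_{ab}$'' with only uniform $C^{0,\a}$ control of $\check g(t)$ --- this is not a standard estimate: it is precisely the low-regularity regularity-of-isometries problem of Calabi--Hartman type, where deducing $C^{1,\a}$ from a merely H\"older metric via the first-order relation is delicate and cannot be waved through as ``Schauder''. Two ways to close the gap inside your framework, both using the full $W^{1,p}$ information (which is what the hypothesis really buys) rather than only $C^{0,\a}$ closeness: (a) make your harmonic-map alternative quantitative: for each $t>0$, $F(t)$ is a smooth Riemannian isometry onto the round sphere, hence harmonic with respect to $g(t)$; embedding $\mathbb{S}^n\subset\R^{n+1}$, it solves the divergence-form system $\p_i\bigl(\sqrt{\det g}\,g^{ij}\p_j F\bigr)=-\sqrt{\det g}\,|\nabla F|_g^2\,F$ with coefficients uniformly bounded in $C^{0,\a}$ (by the uniform $W^{1,p}$ bound, $p>n$) and right-hand side uniformly bounded (by the uniform Lipschitz bound from Theorems~\ref{prop:metric-lowerBound} and \ref{thm: existence-RHF}), so divergence-form Schauder/Campanato estimates give uniform interior $C^{1,\a'}$ bounds; or (b), closest to the paper, apply the identity \eqref{local-DIFF-CalabiHartman} to the smooth isometries $F(t)$ themselves, so that $\p^2F$ is controlled pointwise by the Christoffel symbols of $\check g(t)$ times $\p F$, and use the uniform $L^p$ bound on those Christoffel symbols to get uniform $W^{2,p}$ bounds and then $C^{1,\a}$ compactness by Sobolev embedding. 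In either repair you must also state and source the uniform-in-$t$ estimate $\int_M|\wt\nabla g(t)|^p\le C$ up to $t=0$ (the paper cites \cite{JiangShengZhang2021,ShiTam2016} for exactly this); without it neither your argument nor the paper's goes through.
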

\begin{proof}
The main result follows from refining the proof of Theorem~\ref{intro-mainTHM}.  The idea is similar to that in \cite{LeeTam2021}. Since $g_0\in W^{1,p}$ for $p>n$,  the works in \cite{JiangShengZhang2021,ShiTam2016} infers that the  constructed $G$-flow $g(t)$ satisfies a better estimate on $M\times (0,T]$:
\be\label{Improve-e-derivativesbdd}
\int_M |\tilde \nabla g(t)|^p \leq C.
\ee

Moreover, it is known that each  $\check g(t)=\lambda^{-1}(t)g(t),t\in (0,T]$ is isometric to the standard sphere.  We now construct the diffeomorphism to compensate the singularity of \eqref{Improve-e-derivativesbdd} as $t\to 0$.   Using \eqref{equ:h-flow} and $\Ric(\check g(t))=n-1$, we deduce that
\begin{equation}
\left\{
\begin{array}{ll}
\partial_t \check g_{ij}=\check  \nabla_i \check W_j+\check  \nabla_j \check W_i;\\
\check W_j=\lambda(t)^{-1} \check g_{jk} \check g^{pq}\left( \check \Gamma_{pq}^k-\tilde\Gamma_{pq}^k\right)
\end{array}
\right.
\end{equation}

Therefore, if we consider the ODE:
\begin{equation}
\left\{
\begin{array}{ll}
\partial_t \Psi_t(x)= \check W\left( \Psi_t(x),t\right);\\
\Psi_T(x)=x
\end{array}
\right.
\end{equation}
for $(x,t)\in M\times (0,T]$, then $$\partial_t\left[ (\Psi_t^{-1})^* \check g(t)\right]=0.$$
Hence,  $\check g(t)=\Psi_t^* \check g(T)$ where $\check g(T)$ is a standard spherical metric.  By \cite[(5.2)]{CalabiHartman1970},
\begin{equation}\label{local-DIFF-CalabiHartman}
\frac{\partial^2 \Psi_t^m}{\partial x^i \partial x^j} =\check \Gamma^k_{ij} \frac{\partial \Psi_t^m}{\partial x^k} -\Gamma(\check g(T))_{kl}^m \frac{\partial \Psi_t^l}{\partial x^i}\frac{\partial \Psi_t^k}{\partial x^j}
\end{equation}
in local coordinate of $M$.  Thanks to the improved estimate \eqref{Improve-e-derivativesbdd} and metrics equivalence,  the right hand side of \eqref{local-DIFF-CalabiHartman} is bounded in $L^p,p>n$ as $t\to 0$ and hence $\Psi_t\in W^{2,p}$ uniformly in $t\to 0$.  By standard Sobolev embedding,  we may pass $\Psi_t\to \Psi_0$ for $t\to 0$ in $C^{1,\a}$ for some $\a>0$ and $C^{1,\a}$ diffeomorphism $\Psi_0$ of $M$.  This completes the proof as $\check g(t)\to g_0$ as $t\to 0$.
\end{proof}
\bigskip


As an application of Theorem~\ref{intro-mainTHM}, we prove a rigidity concerning the domain inside sphere.  To make our statement precise,  for a domain $\Omega$ in a Riemannian manifold, we shall adopt the convention $H=\tr(\nabla \nu)$ where $\nu$ is the outward unit normal along $\partial\Omega$ so that the mean curvature of boundary of unit ball is $n-1$. Now we are ready to prove Corollary~\ref{c-domain}.

\begin{proof}[Proof of Corollary~\ref{c-domain}]
The strategy is similar to that in \cite[Theorem B]{CecchiniHankeSchick2022}.  We first consider the general case. Consider the metric $ g$ on $\mathbb{S}^n$ defined by
\begin{equation}
g=\left\{
\begin{array}{ll}
g_0\quad\text{on}\quad \overline\Omega;\\
h\quad\text{on}\quad \mathbb{S}^n\setminus \Omega.
\end{array}
\right.
\end{equation}

With this identification, the metric $g$ is Lipschitz on $M$ and \cite[Proposition 5.1]{LeeLeFloch2015} applies to show that $\mathcal{R}(g)\geq n(n-1)$ in the distribution sense. And hence, $g$ satisfies the assumptions of Theorem~\ref{intro-mainTHM} by \cite[Corollary 1.2]{JiangShengZhang2021} with $f$ chosen to be identity.  Hence, $g=h$ and so does $g_0$ on $\Omega$.

If $\Omega$ is the hemisphere,  then we have $H(g)\geq 0$. Now we instead define $g$ by taking the reflected metric on $\mathbb{S}^n\setminus \Omega$. In this case, $g$ is also Lipschitz and satisfies $\mathcal{R}(g)\geq n(n-1)$ in the distribution sense by \cite[Proposition 5.1]{LeeLeFloch2015}. The conclusion follows using Theorem~\ref{intro-mainTHM}.
\end{proof}

\end{document}